\newtheorem{theorem}{Theorem}
\newtheorem{lemma}{Lemma}
\newtheorem{claim}[equation]{Claim}
\theoremstyle{definition}
\newcommand{\theoremname}{testing}
\theoremstyle{remark}
\newtheorem*{remark*}{Remark}
\numberwithin{equation}{section}
\renewcommand{\phi}{\varphi}
\newcommand{\e}{\varepsilon}
\newcommand{\la}{\lambda}
\newcommand{\cE}{\mathcal E}
\newcommand{\cP}{\mathcal P}
\newcommand*{\dprime}{{\prime \prime}}
\newcommand*{\lapl}{\Delta}
\newcommand*{\dd}[1]{{\rm d}#1}
\newcommand*{\avg}[1]{\left\langle {#1} \right\rangle}
\newcommand*{\wbar}[1]{\overline{#1}}
\newcommand*{\what}[1]{\widehat{#1}}
\newcommand*{\indf}[1]{{1\hskip-2.5pt{\rm l}}_{#1}}
\def\eqdef{\overset{\rm{def}}{=}}
\renewcommand{\le}{\leqslant}
\renewcommand{\ge}{\geqslant}
\newcommand{\bC}{\mathbb C}
\newcommand{\bD}{\mathbb D}
\newcommand{\bT}{\mathbb T}
\title
{Distribution of zeroes of Rademacher Taylor series}
\author{Fedor Nazarov}
\address{Department of Mathematical Sciences, Kent State University, Kent OH 44242, USA
}
\email{nazarov@math.kent.edu}
\author{Alon Nishry}
\address{Department of Mathematics, University of Michigan, Ann Arbor, USA
}
\email{alonnish@umich.edu}
\author{Mikhail Sodin}
\address{School of Mathematical Sciences\\
Tel Aviv University\\
Tel Aviv 69978\\
Israel}
\email{sodin@post.tau.ac.il}
\thanks{This work was partially supported by Grant No.~2012037 of the United States - Israel
Binational Science Foundation (F.N., M.S.), by U.S. National Science Foundation Grants
DMS-0800243 (F.N.) and DMS-1128155 (A.N.), and by Grant No.~166/11  of the Israel Science Foundation of the
Israel Academy of Sciences and Humanities (A.N., M.S.)}
\begin{document}

\begin{abstract}

We find the asymptotics of the counting function of zeroes of random entire functions
represented by Rademacher Taylor series. We also give the asymptotics of the weighted counting
function, which takes into account the arguments of zeroes. These results answer several questions
left open after the pioneering work of Littlewood and Offord of 1948.

The proofs are based on our recent result on the logarithmic integrability of Rademacher Fourier series.

\end{abstract}

\maketitle

\section{Introduction and main results}

In this work, we consider the zero distribution of random entire functions represented by
the Rademacher Taylor series
\[
F(z) = \sum_{k\ge 0} \xi_k a_k z^k\,,
\]
where $\xi_k$ are independent Rademacher (a.k.a. Bernoulli) random variables, which take the values
$\pm 1$ with probability $\tfrac12$ each, and $\{ a_k\}$ is a (non-random)
sequence of complex numbers such that $ \lim_k |a_k|^{1/k} = 0 $ and
$ \# \{ k\colon a_k \ne 0 \} = \infty $.

\subsection{Peculiarity of the Rademacher case. R\^{o}le of the logarithmic integrability}

Consider a more general class of random Taylor series
with infinite radius of convergence:
\[
F(z) = \sum_{k\ge 0} \chi_k a_k z^k\,,
\]
in which the Rademacher random variables $\xi_k$ are replaced with general
independent identically distributed mean zero complex-valued random variables $\chi_k$ normalized by the condition $\cE |\chi_k|^2=1$,
and $\{ a_k\}$ are as above.
Let $ Z_F$ be the zero set of $F$ (with multiplicities).
Let us try to figure out how the asymptotics of the random counting
function $ n_F(r) = \# \{\zeta\in Z_F\colon |\zeta|\le r \}$ should look as $r\to\infty$.

Put
\[
\sigma_F(r)^2 = \mathcal E\bigl\{ |F(z)|^2 \bigr\} =
\sum_{k \ge 0} |a_k|^2 r^{2k}\,.
\]
To simplify the exposition, assume that $|a_0|=1$.
Denote by
\[
N_F(r) = \int_0^r \frac{n_F(t)}{t}\, \dd{t}
\]
the integrated counting function of the zero set $Z_F$. Then, by Jensen's formula,
\begin{eqnarray*}
N_F(r) &=& \int_{-\pi}^\pi \log|F(re^{{\rm i}\theta})|\, \frac{\dd{\theta}}{2\pi} - \log|F(0)| \\
&=& \log\sigma_F(r) + \int_{-\pi}^\pi \log|\what{F}_r(\theta)|\, \frac{\dd{\theta}}{2\pi} - \log|\chi_0|\,, \nonumber
\end{eqnarray*}
where $ \what{F}_r(\theta) \stackrel{\rm def}= F(re^{{\rm i}\theta})/\sigma_F(r) $. Note that
$ \what{F}_r(\theta) = \sum_{k\ge 0}\, \chi_k
\what{a}_k(r) e^{ {\rm i}k\theta} $
is a random Fourier series satisfying the condition
$\sum_{k\ge 0} | \what{a}_k(r) |^2 = 1$.

\medskip
First, assume that the
$\chi_k$'s are standard complex-valued Gaussian random variables. Then for every $\theta$,
the random variable $ \what{F}_r(\theta) $ is again a standard complex-valued Gaussian
random variable, and
$ \cE \bigl| \log |\what{F}_r(\theta)| \bigr| $ is a positive numerical constant.
Therefore,
\begin{equation}\label{eq:N-estimate}
\sup_{r>0}\, \cE \bigl| N_F(r) - \log\sigma_F(r) \bigr| \le C\,.
\end{equation}
Since both $ N_F(r) $ and $ \log \sigma_F(r) $ are convex functions of $\log r$, we can derive
from here that the functions
\[
n_F(r) = \frac{\dd{N_F(r)}}{\dd{\log r}} \quad \text{and} \quad
s_F(r)=\frac{\dd{\log\sigma_F(r)}}{\dd{\log r}}
= \frac{\sum_{k\ge 1} k|a_k|^2 r^{2k}}{\sum_{k\ge 0} |a_k|^2 r^{2k}}
\]
are also close for most values of $r$.
If we are interested in the
angular distribution of zeroes, the same idea works, we only need to replace Jensen's
formula by its modification for angular sectors.

\medskip
The same approach works in the Steinhaus case when
$\chi_k=e^{2\pi {\rm i} \gamma_k}$, where $\gamma_k$ are independent and
uniformly distributed on $[0, 1]$. In this case, one needs to
estimate the expectation of the modulus of the logarithm of the absolute value
of a normalized linear combination of independent Steinhaus
variables. This was done by Offord in~\cite{Offord-1968}; twenty
years later, Ullrich~\cite{Ullrich-88a, Ullrich-88b} and Favorov~\cite{Favorov-90, Favorov-94}
independently rediscovered his idea and applied it to various other problems.
See also recent works by Mahola and Filevich~\cite{MaFi1, MaFi2}.

\medskip
A linear combination of Rademacher random variables $ x = \sum_k \xi_k a_k $ can vanish with
positive probability. This leaves no hope to get a uniform lower bound for the logarithmic expectation
$ \cE \bigl\{ \log |x | \bigr\} $.
In~\cite{L-O}, Littlewood
and Offord invented ingenious and formidable techniques to
circumvent this obstacle. These techniques were further
developed by Offord in~\cite{Offord-1965, Offord-1995}. Apparently, the
methods of these works were
not sufficiently powerful to arrive at the same
conclusions as for the Gaussian and the Steinhaus coefficients.
Still, note that in order to estimate the error
term in the Jensen formula we do not need to estimate
$\cE \bigl| \log|\what{F}_r(\theta)|\, \bigr| $ uniformly in $\theta$.
Instead, we will be using the estimate
\begin{equation}\label{Log_int_est}
\cE \Bigl\{ \int_{-\pi}^\pi \bigl| \log|\what{F}_r(\theta)|\, \bigr|^p\, \frac{\dd{\theta}}{2\pi}
\Bigr\} \le (Cp)^{6p}, \qquad p\ge 1\,,
\end{equation}
proven in our recent work on the logarithmic integrability of
Rademacher Fourier series~\cite[Corollary 1.2]{NNS}. This will allow us to
extend the results known for the Gaussian and the Steinhaus coefficients
to the Rademacher case.

\medskip
Now, we describe the main results of this work. In what follows, we will use the notation
$r\bD = \{z\colon |z| < r\}$, $r\bar\bD = \{z\colon |z| \le r\}$, and $r\bT = \{ z\colon |z|=r\}$.
By $(\Omega, \cP)$ we always denote our probability space.

\subsection{Asymptotics of the number of zeroes in disks of large radii}

First, we address the asymptotics of the random counting function
$ n_F(r) = \# \{\zeta\in Z_F \cap r\bar\bD \} $.
Our asymptotics will hold when $r$ tends to infinity outside an exceptional set $E\subset [1, \infty)$ of finite
logarithmic length:
\[
m_\ell (E) = \int_E \frac{\dd{t}}{t} < \infty\,.
\]
Note that if the sequence $\{|a_k|\}$ is very irregular, the counting function $n_F(r)$
may exhibit a fast growth on short intervals, so the introduction of the set $E$
is unavoidable.

\begin{theorem}\label{thm1}
There exists a set $ E \subset [1, \infty) $ {\rm (}depending on $|a_k|$ only{\rm )} of finite logarithmic length
such that

\smallskip\noindent {\rm (i)} for almost every $\omega\in\Omega$, there exists $r_0(\omega)\in [1, \infty)$
such that for  every $r\in [r_0(\omega), \infty)\setminus E $ and
every $ \gamma > \tfrac12 $,
\[
\bigl| n_F(r) - s_F(r) \bigr| \le C(\gamma) s_F(r)^\gamma\,;
\]

\noindent {\rm (ii)}
for every $r\in [1, \infty)\setminus E $, and every $\gamma > \tfrac12 $,
\[
\cE \bigl|  n_F(r) - s_F(r) \bigr| \le C(\gamma) s_F(r)^\gamma\,.
\]
\end{theorem}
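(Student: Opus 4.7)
The plan is to execute the blueprint already laid out in the introduction. First, put $\Delta(r)=N_F(r)-\log\sigma_F(r)$. Since $|\xi_0|=1$ for Rademacher variables, the Jensen identity computed there simplifies to $\Delta(r)=\int_{-\pi}^{\pi}\log|\what{F}_r(\theta)|\,\frac{\dd{\theta}}{2\pi}$. Applying Jensen's inequality to the $\theta$-integral together with the estimate~\eqref{Log_int_est} yields
\[
\cE|\Delta(r)|^p\le (Cp)^{6p}\quad\text{for all }p\ge 1,
\]
uniformly in $r$; optimizing in $p$ gives the sub-Weibull tail $\bP\{|\Delta(r)|>t\}\le \exp(-ct^{1/6})$.

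Next I would pass from integrated information about $N_F-\log\sigma_F$ to pointwise information about $n_F-s_F$ by a standard convexity/monotonicity trick. Since $n_F$ and $s_F$ are non-decreasing in $r$ and are the derivatives, with respect to $\log r$, of the convex functions $N_F$ and $\log\sigma_F$, for every $h>0$ one has
\[
|n_F(r)-s_F(r)|\le \bigl[s_F(re^{h})-s_F(re^{-h})\bigr]+\frac{|\Delta(re^{h})|+2|\Delta(r)|+|\Delta(re^{-h})|}{h}.
\]
Fix $\gamma>\tfrac12$, set $h(r)=s_F(r)^{-\gamma}$, and define the exceptional set
\[
E=\bigl\{r\ge 1\colon s_F(re^{h(r)})-s_F(re^{-h(r)})>s_F(r)^{\gamma}\bigr\},
\]
which depends only on $\{|a_k|\}$. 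To bound $m_\ell(E)$, slice $[1,\infty)$ into dyadic level sets $I_j=\{r\colon 2^j\le s_F(r)<2^{j+1}\}$: on $I_j$ the total increment of $s_F$ is $\simeq 2^j$, while each bad window carries an increment of at least $2^{j\gamma}$ and has log-length $2h\simeq 2^{-j\gamma}$. Since the bad windows are disjoint, at most $2^{j(1-\gamma)}$ of them fit in $I_j$, contributing total logarithmic length at most $2^{j(1-2\gamma)}$. The threshold $\gamma>\tfrac12$ is precisely what makes $\sum_j 2^{j(1-2\gamma)}$ converge.

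With $E$ constructed, part (ii) follows at once: for $r\notin E$ and $h=h(r)$, taking expectations in the displayed inequality and using the uniform moment bound $\cE|\Delta|\le C$ gives $\cE|n_F(r)-s_F(r)|\le C(\gamma)s_F(r)^{\gamma}$. For (i) I would pick a slightly smaller exponent $\gamma'\in(\tfrac12,\gamma)$ in the construction of the exceptional set, enlarging $E$ only by a further set of finite logarithmic length, and apply the sub-Weibull tail together with Borel--Cantelli along a sparse sequence $\{r_k\}$ (e.g.\ defined by $s_F(r_k)=2^k$) to get, almost surely, $|\Delta(r_k,\omega)|=o(s_F(r_k)^{\gamma-\gamma'})$; the monotonicity of $n_F$ and $s_F$ then transfers the bound from the $r_k$ to every $r\notin E$ at the price of absorbing the gaps into $E$. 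The main technical obstacle is the calibration: one must balance (a) the window length $h(r)$ against the permitted jump of $s_F$ (which forces $\gamma>\tfrac12$), and (b) for the almost-sure statement, the density of the sequence $\{r_k\}$ against the rate at which $|\Delta(r)|$ can oscillate between consecutive radii.
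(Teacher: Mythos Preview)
For part~(ii) your argument is essentially the paper's: both derive the same monotonicity/difference-quotient inequality from Jensen's formula and bound the resulting $\Delta$-terms via~\eqref{Log_int_est}. The paper constructs $E$ differently, as a union of fixed intervals $[r_ke^{-\delta_{k-1}},r_ke^{\delta_k}]$ around points with $s_F(r_k)=k^2$, but your intrinsic ``large-jump'' definition is a legitimate alternative. Two small repairs are needed: your $E$ depends on $\gamma$ while the theorem asks for a single $E$ (fix: take $h(r)=s_F(r)^{-1/2}\log^{-2}s_F(r)$ and threshold $s_F(r)^{1/2}\log^{2}s_F(r)$), and the claim ``the bad windows are disjoint'' is false as stated---you need a Vitali covering or the weak-type maximal inequality for the monotone function $s_F$ to justify the $2^{j(1-2\gamma)}$ bound.

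For part~(i) there is a genuine gap. Controlling $|\Delta(r_k)|$ along the sparse grid $s_F(r_k)=2^k$ via Borel--Cantelli does not give $|n_F(r)-s_F(r)|$ for general $r$: your convexity inequality at $r$ involves $\Delta$ at the \emph{$r$-dependent} radii $re^{\pm h(r)}$, which are not on the grid, and $\Delta$ is not monotone. If instead you interpolate using only grid points, monotonicity yields at best
\[
|n_F(r)-s_F(r)|\le \bigl[s_F(r_{k+2})-s_F(r_{k-1})\bigr]+\frac{\textstyle\sum_j|\Delta(r_j)|}{\min_j\log(r_{j+1}/r_j)}\,,
\]
and with $s_F(r_k)=2^k$ the first bracket is $\sim s_F(r)$, not $s_F(r)^\gamma$. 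A denser grid such as $s_F(r_k)=k^2$ fixes that bracket but destroys any lower bound on $\log(r_{k+1}/r_k)$, so the second term may blow up. The paper's resolution is exactly to absorb the short intervals $[r_ke^{-\delta_{k-1}},r_ke^{\delta_k}]$ with $\delta_k=(k\log^2k)^{-1}$ into $E$; on the complement one then has $\log(r_{k+1}/r_k)\ge 2\delta_k$, and Borel--Cantelli is applied to $\|X_\rho\|_1$ at the four \emph{fixed} radii $r_k,\,r_ke^{\delta_k},\,r_{k+1}e^{-\delta_k},\,r_{k+1}$. Your ``jump'' exceptional set does not provide this lower bound on the denominator, so you would have to enlarge $E$ in precisely this way---at which point you have reproduced the paper's construction.
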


\subsection{Angular distribution of zeroes}

To address the angular distribution of zeroes, we introduce the counting
function
\[
n_F(r, \phi) = \sum_{\zeta\in (Z_F\setminus\{0\})\cap r\bar\bD} \phi (\arg \zeta)\,.
\]
Here and below, $\phi$ is a $2\pi$-periodic $C^2$-function, $0 \le \phi \le 1$.

In what follows, we denote by $A_F$ various positive constants that may depend only on the sequence
$\{ |a_k| \}$ of the absolute values of the Taylor coefficients of $F$. The symbol $ \avg{h} $ will stand
for the mean
\[
\avg{h} = \int_{-\pi}^\pi h(\theta)\, \frac{\dd{\theta}}{2\pi}\,.
\]

\begin{theorem}\label{thm2}
There exists a set $E\subset [1, \infty)$ {\rm (}depending on $|a_k|$ only{\rm )} of finite logarithmic length  such that

\smallskip\noindent {\rm (i)} for almost every $\omega\in\Omega$, every $r\in [r_0(\omega), \infty)\setminus E$,
every $2\pi$-periodic $C^2$-smooth function $\phi\colon [-\pi, \pi] \to [0, 1]$,
every $\gamma>\tfrac12$, and every $q>1$,
\[
\bigl| n_F(r, \phi) - \cE\bigl\{ n_F(r, \phi) \bigr\} \bigr| \le C(\gamma, q) \left( 1+ \| \phi'' \|_q \right)
\left( s_F(r)^\gamma + \log^\gamma r + A_F \right);
\]

\noindent {\rm (ii)}
for every $r\in [1, \infty)\setminus E$, every $2\pi$-periodic $C^2$-smooth function $\phi\colon [-\pi, \pi] \to [0, 1]$, every $\gamma>\tfrac12$, and every $q>1$,
\[
\cE \bigl| n_F(r, \phi) - \avg{\phi} s_F (r) \bigr| \le C(\gamma, q) \left( 1+ \| \phi'' \|_q \right)
\left( s_F(r)^\gamma + \log r + A_F \right).
\]
\end{theorem}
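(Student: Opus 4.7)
The plan is to derive a weighted Jensen-type identity for $n_F(R,\phi)$ and then estimate its ingredients using the logarithmic integrability bound~\eqref{Log_int_est} and Theorem~\ref{thm1}.

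Starting from $\lapl u = 2\pi\sum_{\zeta\in Z_F}\delta_\zeta$ for $u=\log|F|$, written in polar coordinates as $r^{-1}\partial_r(r\partial_r u)+r^{-2}\partial_\theta^2 u=\lapl u$, I would multiply by $r\phi(\theta)$, integrate in $\theta\in[-\pi,\pi]$ (pushing the two $\theta$-derivatives onto $\phi$ by parts, using $2\pi$-periodicity) and then integrate in $r$ from $0$ to $R$. Since $F(0)\ne 0$ almost surely (as $\xi_0=\pm 1$), the boundary contribution at $r=0$ vanishes, and splitting $u(r,\theta)=\log\sigma_F(r)+\log|\what{F}_r(\theta)|$ together with the cancellation $\int\phi''\log\sigma_F(r)\,d\theta=0$ yields
\[
 n_F(R,\phi)-\avg{\phi}\,s_F(R)\;=\;R\,\frac{d}{dR}\avg{\phi\,\log|\what{F}_R|}\;+\;\int_0^R\avg{\phi''\,\log|\what{F}_r|}\,\frac{dr}{r}.
\]
A sanity check with $\phi\equiv 1$ recovers the usual Jensen formula of the introduction.

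The volume integral is bounded by H\"older's inequality with $p$ conjugate to $q$: $|\avg{\phi''\log|\what{F}_r|}|\le\|\phi''\|_q\,\avg{|\log|\what{F}_r||^p}^{1/p}$, and \eqref{Log_int_est} gives $\cE\,\avg{|\log|\what{F}_r||^p}^{1/p}\le (Cp)^6$ uniformly in $r$; integrating $dr/r$ from a small cutoff (with the contribution near $r\simeq 1$ absorbed into $A_F$) up to $R$ produces the $\log r$ term in~(ii) with the required $\|\phi''\|_q$ prefactor. The more delicate derivative term $R\frac{d}{dR}\avg{\phi\log|\what{F}_R|}$ is handled by averaging in $R$ over a short logarithmic window $[R,R(1+\delta)]$ with $\delta=s_F(R)^{\gamma-1}$: after multiplying the identity by $dR/R$ and integrating, the derivative collapses to the telescoped difference $\avg{\phi\log|\what{F}_{R(1+\delta)}|}-\avg{\phi\log|\what{F}_R|}$, whose $L^p$ moments are bounded by \eqref{Log_int_est}, and dividing by the window length $\simeq\delta$ contributes a factor of order $s_F(R)^{1-\gamma}\le s_F(R)^\gamma$, which is where the hypothesis $\gamma>\tfrac12$ enters. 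Monotonicity of $n_F(\cdot,\phi)$ and $s_F(\cdot)$, combined with Theorem~\ref{thm1} used to control the variation of $s_F$ across the window, then lets one recover $n_F(R,\phi)-\avg{\phi}s_F(R)$ at a single $R\notin E$ with error $O(s_F(R)^\gamma)$, where $E\subset[1,\infty)$ of finite logarithmic length absorbs those radii at which $s_F$ changes too fast. This establishes~(ii).

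Part~(i) is then obtained by a Borel--Cantelli argument applied along a geometric sequence $R_n=q^n$ ($q>1$ close to $1$, $R_n\notin E$): the high-moment version of \eqref{Log_int_est} gives tail bounds $\bP\{|\cdot|>t\}\le C_p t^{-p}$ strong enough that $\sum_n t_n^{-p}$ converges with $t_n=(\log R_n)^\gamma$, which accounts for the $\log^\gamma r$ factor in~(i), while monotonicity bridges the grid points. The principal technical obstacle is precisely the derivative term $R\frac{d}{dR}\avg{\phi\log|\what{F}_R|}$: since $\log|\what{F}_R|$ is highly irregular in $R$, no direct pointwise control of its radial derivative is available, and the averaging trick introduces a genuine trade-off between the window width $\delta$ and the oscillation of $s_F$ across it. This trade-off is exactly what forces the exceptional set $E$ into existence and binds the proof of Theorem~\ref{thm2} tightly to Theorem~\ref{thm1}.
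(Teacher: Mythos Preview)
Your treatment of part~(ii) is essentially the paper's: the Jensen-type identity, the H\"older bound on the volume integral $\int_0^R\avg{\phi'' X_r}\,dr/r$ producing the $\log r$ term, and the handling of the derivative term by averaging over a short logarithmic window all match. (The paper works directly with the integrated Jensen formula rather than differentiating first, but once you average your derivative term over a window the two are equivalent.)

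Part~(i), however, has a real gap. The target bound contains $\log^\gamma r$ with any $\gamma>\tfrac12$, which is \emph{smaller} than the $\log r$ of part~(ii); your Borel--Cantelli argument cannot reach it. The $p$-th moment of the volume term $\wbar Q(R)=\int_1^{R}\avg{\phi''\,\wbar X_s}\,ds/s$ that you obtain from~\eqref{Log_int_est} and H\"older is only of order $(Cp^6\log R)^p$, so the best a.s.\ bound Borel--Cantelli along $R_n=q^n$ can squeeze out is $|\wbar Q(R_n)|\lesssim\log R_n\cdot(\log\log R_n)^6$, not $(\log R_n)^\gamma$. Taking $t_n=(\log R_n)^\gamma$ as you suggest makes $\sum_n\cP\{|\wbar Q(R_n)|>t_n\}$ diverge, not converge. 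The paper flags this at the start of its proof: controlling $\wbar Q$ ``requires a new idea when $s_F(r)$ is comparable to or less than $\log r$.'' That idea is a genuine square-root cancellation argument: truncate the logarithm at level $\Lambda^6$; replace $F$ on each short interval $J_j=[e^{j\tau},e^{(j+1)\tau}]$ by its central group of terms (Lemma~\ref{approx_by_main_terms}); observe that on ``slow'' intervals (where the central index is constant on $J_{j-1}\cup J_j\cup J_{j+1}$) the central group is a single monomial, hence deterministic, so those intervals contribute nothing to $\wbar Q$; finally, the contributions from well-separated ``fast'' intervals are \emph{independent} bounded mean-zero random variables, and a Khinchin--Marcinkiewicz--Zygmund inequality turns the sum of $L\asymp\tau^{-1}\log r$ such terms into $O(\sqrt{L})$. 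After optimizing in $\Lambda$ and $\tau$ this yields $|\wbar Q|\lesssim\|\phi''\|_q(\log r)^{1/2+\varepsilon}$ a.s., which is exactly the missing ingredient. None of this structure is present in your proposal.

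A smaller point: ``monotonicity bridges the grid points'' is not quite enough, since $n_F(r,\phi)-\cE n_F(r,\phi)$ is not monotone. The paper bridges by bounding $n_F(t,\phi)-n_F(r,\phi)\le n_F(t)-n_F(r)$ for $t\ge r$ (using $0\le\phi\le1$) and then controlling the unweighted increment via the $\phi\equiv1$ Jensen formula; this is why the term $s_F(r_{k+1})-s_F(r_k)$ appears and why the choice $s_F(r_k)+\log r_k=k^2$ is made.
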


Theorem~\ref{thm2} yields the angular equidistribution of zeroes of $F$ provided that $s_F(r)$ does not grow
too slowly:
\[
\lim_{r\to\infty} \frac{s_F(r)}{\log r} = +\infty\,.
\]
Taking into account that $\log\sigma_F (r)$ is a convex function of $\log r$,
it is not difficult to see that this condition is equivalent to
\[
\lim_{r\to\infty} \frac{\log\sigma_F(r)}{\log^2 r} = +\infty\,,
\]
which in turn is equivalent to a more customary growth condition:
\[
\lim_{r\to\infty} \frac{\log M_F(r)}{\log^2 r} = +\infty\,, \qquad M_F(r) = \max_{r\bar D} |F|\,,
\]
which often occurs in the theory of entire functions, cf.~\cite[Section~7.2]{Hayman}.

It is also worth mentioning that
the first statement of Theorem~\ref{thm2} remains meaningful as long as $s_F(r)>\log^\kappa r$ with some
$\kappa > \tfrac12 $; i.e., beyond the $\log r$-threshold.

\subsection{Relation of our results to those by Littlewood and Offord}

In~\cite{L-O}, Littlewood and Offord  studied the distribution of zeroes of random entire functions of finite positive
order represented by Rademacher Taylor series. They used the maximal term $\mu _F(r) = \max_{k\ge 0} \left( |a_k|r^k \right)$ of the Rademacher
Taylor series $F$, which is basically equivalent to the quantity $\sigma_F (r)$ we are using here:
obviously, $ \mu_F (r) \le \sigma_F(r) $ everywhere, while, for every $\gamma>\tfrac12$,
$ \sigma_F(r) \le \mu_F(r) \log^\gamma \mu_F(r) $ outside an exceptional set of $r$'s of
finite logarithmic length (this is  a  classical result of Wiman and Valiron, see, for example,~\cite[Section~6.2]{Hayman}).
Littlewood and Offord discovered that, for every $\varepsilon>0$,
\begin{equation}\label{eq:LO}
\log |F(re^{{\rm i}\theta})| \ge \log\mu_F(r) - O_\varepsilon (r^\varepsilon)
\end{equation}
everywhere in the complex plane outside a union of simply connected domains of
small diameters. They called these domains ``pits''. Littlewood and Offord provided
a very detailed information about the sizes of the pits and the distribution of their locations.
From this, they were able to obtain some upper and lower a.s. bounds for the random integrated counting functions
$ N_F(r)$ and $N_F(r, \phi)$. However, these bounds differed
by a positive constant factor and did not yield the leading term of the asymptotics.

Later, Offord~\cite{Offord-1965, Offord-1995} extended the main results of~\cite{L-O}
to random entire functions of positive or infinite order of growth
represented by random Taylor series with more or less arbitrarily distributed sequence of independent random coefficients.

\subsection{Regularly decaying sequences $\{|a_k|\}$}
If the sequence of absolute values $ \{ |a_k|\} $ behaves very regularly:
\[
|a_k| = (\Delta+o(1))^k e^{-\alpha k \log k}\,, \qquad k\to\infty\,,
\]
with some positive constants $\Delta$ and
$\alpha$, then combining ~\eqref{eq:LO} with some results from the Levin-Pfluger theory of entire functions of completely
regular growth, one can obtain the leading term of the asymptotics provided by Theorems~\ref{thm1} and~\ref{thm2}.
It is also worth mentioning that recently Kabluchko and Zaporozhets~\cite[Corollary~2.6]{KZ} found a new elegant
approach to this special case, which is based on estimates for the concentration function combined with some
tools from potential theory. Their approach works for a very general class of non-degenerate i.i.d.
random variables $\chi_k$ (it needs only that $\cE \bigl\{ \log^+|\chi_k| \bigr\} <\infty $).
However, it seems that their approach should not work when $|a_k|$ does not have a very regular behavior.

Yet another approach was recently developed by Borichev, Nishry and Sodin in~\cite{BNS}. That approach works for certain
correlated stationary sequences $\chi_k$ as well as for some pseudo-random sequences of arithmetic origin, but still
requires a high regularity of the non-random sequence $\{ |a_k| \}$.

\subsection{Series with dominating central terms}

We complete this introduction with a brief discussion of (deterministic) Taylor series
\[
F(z) = \sum_{k\ge 0} \e_k a_k z^k,  \qquad \e_k\in \{\pm 1\}
\]
in which each non-zero term dominates on some circumference centered at the origin, i.e.,
series such that for every $k$ with $a_k\ne 0$, there exists $r_k>0$ such that
\[
|a_k|r_k^k > K \sum_{\ell\colon \ell\ne k} |a_k| r_k^\ell \qquad {\rm with\ some\ } K\ge 1\,.
\]
Note that this condition does not depend on the choice of the signs $\e_k$, so the corresponding
central term $\e_k a_k z^k$ dominates in all series simultaneously and, by Rouch\'e's theorem,
\[
n_F(r_k) = k \qquad {\rm regardless\ of\ } \{\e_\ell\}\,.
\]
This can be used to check sharpness of our constructions.

\subsubsection{}
First, we can give each power $k$ a possibility to dominate, thus ensuring that each annulus
$A_k=\{z\colon r_k<|z|<r_{k+1}\}$ contains exactly one zero of $F$. If $K$ is sufficiently large,
then the sum $\e_k a_k z^k + \e_{k+1} a_{k+1}z^{k+1}$ dominates the rest of the series
in the whole annulus $A_k$ except a small angle where the arguments of the two terms
are nearly opposite. So we can guarantee that the argument of the unique zero of $F$ in $A_k$
is close to that of \[-\frac{\e_k}{\e_{k+1}}\, \frac{a_k}{a_{k+1}}\,.\]
Since the first factor is just $\pm 1$, we can create almost as irregular angular
distribution of arguments of zeroes as we want. For instance, if $a_k$'s are real, then all
zeroes of $F$ will be real as well. This does not contradict Theorem~\ref{thm2} because
giving each index a possibility to dominate imposes a severe restriction on the growth
of $f$ and, thereby, on the growth of $n_F(r)$. It turns out that in this ``totally
irregular angular distribution case'', we have $n_F(r)$ and $s_F(r)$ comparable to
$\log r$, so the error term in part (ii) of Theorem~\ref{thm2} starts to exceed
the main one.

\subsubsection{}\label{subsub:lacunary}
Another possibility is to create a lacunary series
\[
F(z) = \sum_{j\ge 0} \e_j a_j z^{\la_j},  \qquad \e_j\in \{\pm 1\}
\]
in which the positive integer indices $\{\la_j\}$, $\la_0<\la_1< \ldots $, are sufficiently sparse.
In this case, there are sharp jumps in the number of zeroes of $F$ in narrow annuli
around the circumferences $C_j=\{z\colon |z|=\rho_j\}$ with radii given by
\[
\rho_j^{\la_{j+1}-\la_j} = \frac{a_{j}}{a_{j+1}}
\]
on which the subsequent non-zero terms of the series have equal
absolute value. On the other hand, the function $s_F(r)$, being defined by
a relatively nice formula, is necessarily rather smooth near the
radii $\rho_j$, so it starts growing somewhat earlier and finishes growing somewhat
later than $n_F(r)$. This creates large errors of opposite signs in the formula $n_F(r)\approx s_F(r)$
slightly to the left and slightly to the right of $\rho_j$, which shows that, in general,
allowing an exceptional set $E$ in Theorem~\ref{thm1} is inevitable.

\section{Preliminaries}

\subsection{Notation}
Throughout the paper we use the following notation:
\begin{itemize}
\item[$\diamond$]
For a function $h\colon [-\pi,\pi] \to \bC$, we write
\[
\avg{h} = \int_{-\pi}^\pi h(\theta) \, \frac{\dd{\theta}}{2 \pi} \qquad {\rm and} \quad
\| h \|_q = \left(\int_{-\pi}^\pi | h(\theta) |^q \, \frac{\dd{\theta}}{2 \pi}\right)^\frac1q\,.
\]

\medskip
\item[$\diamond$]
For a random variable $Y$ with finite first moment, we write $\overline{Y} = Y - \cE Y$.

\medskip
\item[$\diamond$]
By $F$ we denote a random entire function represented by a Rademacher Taylor series.

\medskip
\item[$\diamond$]
We denote the variance of $F(z)$ by
$\sigma_F(r)^2 = \cE\{ |F(z)|^2\}$, $r=|z|$, and put
$\what{F}_r(\theta)=F(re^{{\rm i}\theta})/\sigma_F(r)$.

\medskip
\item[$\diamond$]
We often use the notation
$ X_r = X_r(\theta) = \log | \what{F}_r(\theta) | $.

\medskip
\item[$\diamond$]
By $Z_F$ we denote the zero set of $F$.

\medskip
\item[$\diamond$]
By $C,c$ we denote various positive numerical constants. Their values may change from line to line.
If $\kappa$ is a parameter, then $C(\kappa), c(\kappa)$ are positive expressions that depend only on $\kappa$.

\medskip
\item[$\diamond$]
By $A_F$ we denote various positive expressions that may depend only on the sequence $\{|a_k|\}$ of the absolute values of the Taylor coefficients of $F$.
\end{itemize}

\subsection{Normalization}
When proving Theorems~\ref{thm1} and~\ref{thm2} we assume that
\[
F(z) =  1 + \sum_{k\ge 1} \xi_k a_k z^k
\]
with
\[
\sum_{k\ge 1} |a_k| \le \tfrac12
\]
(as before, $\lim_k |a_k|^{1/k}=0$, $\#\{k\colon a_k\ne 0\}=\infty$, and $\xi_k$ are
independent Rademacher random variables). To reduce the arbitrary Rademacher Taylor series
$F$ to this special form, first, we replace $F$ by the function
$F_1(z)=F(z)/(\xi_m a_m z^m)$, where $m$ is the least index with $a_m\ne 0$. For this function,
we have $n_{F_1}(r)=n_F(r)-m$, and $n_{F_1}(r, \phi) = n_F(r, \phi)$.
Furthermore, $\log\sigma_{F_1}(r)=\log\sigma_F(r)-\log|a_m|-m\log r$, whence,
$s_{F_1}(r)=s_F(r)-m$. Therefore, both assumptions and conclusions of Theorems~\ref{thm1} and~\ref{thm2}
remain invariant under this normalization.

Then, we put $F_2(z)=F_1(A_F^{-1}z)$ with $A_F=\max\bigl\{ 2\sum_{k\ge 1}|a_k|, 1 \bigr\}$.
This function already has the form we need, and both assumptions
and conclusions of Theorems~\ref{thm1} and~\ref{thm2} remain invariant under the scaling
$z\mapsto A_F^{-1}z$.

\subsection{Main tools}
Our main tool will be the following lemma:

\begin{lemma}[Log-integrability]\label{lemma:log_integ}
For any $p\ge 1$ and $t > 0$,
\[
\cE \| X_t \|_p^p \le \left( C p \right)^{6 p} \,.
\]
In particular, for $\la\ge 1$,
\[
\bigl( \cP \times m \bigr)\left\{ (\omega, \theta) \in \Omega\times\left[-\pi, \pi \right]\colon\, \left| X_t(\theta) \right| > \lambda \right\} 
\le C \exp(-c \lambda^{1/6}) \,,
\]
where $m$ is the Lebesgue measure on $\left[-\pi, \pi \right]$.
\end{lemma}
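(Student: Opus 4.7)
The first inequality is essentially a direct quotation: the normalized random Fourier series $\what F_t(\theta)=\sum_k \xi_k \what a_k(t) e^{{\rm i}k\theta}$ has Rademacher coefficients satisfying $\sum_k |\what a_k(t)|^2=1$ (independent of $t$), so it falls under the scope of \cite[Corollary 1.2]{NNS}, which is precisely the bound \eqref{Log_int_est} recalled in the introduction. The plan is therefore just to verify that the hypotheses of that corollary are met uniformly in $t$, and to point the reader to it.

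For the tail bound, my plan is a direct Chebyshev/Markov argument with an optimized exponent. Writing $\mu = \cP\times m$, for every $p\ge 1$ one has
\[
\mu\bigl\{(\om,\theta): |X_t(\theta)|>\la\bigr\} \le \la^{-p}\,\cE\|X_t\|_p^p \le \la^{-p}(Cp)^{6p} = \left(\frac{C^6 p^6}{\la}\right)^p.
\]
The strategy is then to choose $p = c\la^{1/6}$ with $c$ small enough that $C^6 p^6/\la \le e^{-1}$; this gives $\mu\{|X_t|>\la\}\le e^{-p} = \exp(-c\la^{1/6})$. One needs $p\ge 1$, i.e.\ $\la$ bigger than some absolute constant; for smaller $\la$ the bound holds trivially after adjusting the leading constant $C$.

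The only place where attention is required is in verifying that the constant hidden in $(Cp)^{6p}$ in \cite[Corollary 1.2]{NNS} is genuinely independent of the particular sequence $\{\what a_k(t)\}$, so that it can be quoted with $t$ varying freely; since the corollary only uses the $\ell^2$-normalization of the coefficient sequence, this is automatic. There is no real obstacle here --- the lemma is a repackaging of the earlier result together with an optimization of $p$ in Markov's inequality. I do not expect any step to be hard; the whole proof should fit in a few lines.
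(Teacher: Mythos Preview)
Your proposal is correct and matches the paper's own treatment exactly: the paper states that the first inequality is quoted from \cite{NNS} and that the second follows from the first by Chebyshev's inequality, which is precisely your Markov-plus-optimization argument with $p\asymp\lambda^{1/6}$.
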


The first statement of this lemma is our recent result from~\cite{NNS}. The second statement follows from the first one
by Chebyshev's inequality.

\medskip Our second tool is a version of the classical Jensen formula. The standard version corresponds to
the case $\phi \equiv 1$.

\begin{lemma}[Jensen-type formula]\label{lemma:gen_Jensen_formula}
Let $F$ be an entire function with $F(0)\ne 0$. Then, for any $2\pi$-periodic $C^2$-function $\phi$ and every $R>0$,
we have
\begin{multline*}
\int_0^R \frac{n_F(t, \phi)}{t} \,\dd{t}  =
\int_{-\pi}^\pi \phi(\theta) \bigl[ \log | F(R e^{i \theta})| - \log |F(0)| \bigr]\,\frac{\dd{\theta}}{2 \pi} \\[7pt]
+ \int_0^R \frac{\dd{t}}{t} \int_0^t \frac{\dd{s}}{s}
\int_{-\pi}^\pi \phi^\dprime(\theta) \log | F(s e^{i \theta})| \frac{\dd{\theta}}{2 \pi} \,.
\end{multline*}
\end{lemma}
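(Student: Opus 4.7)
The plan is to derive the formula by applying Green's identity to $v(z)=\log|F(z)|$ and a judiciously chosen smooth test function $w$. Specifically, in polar coordinates $z=se^{\im\theta}$, I would set
\[
w(se^{\im\theta}) \;=\; \phi(\theta)\,\log(R/s).
\]
This choice is dictated by what we want to show: the value $w(\zeta_j)=\phi(\arg\zeta_j)\log(R/|\zeta_j|)$ at a zero is exactly the contribution one zero makes to $\int_0^R n_F(t,\phi)t^{-1}\,\dd{t}$.

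Using the polar-coordinate Laplacian $\Delta=\frac1s\partial_s(s\partial_s)+\frac{1}{s^2}\partial_\theta^2$, a direct computation gives $s\partial_s w=-\phi(\theta)$, so $\partial_s(s\partial_s w)=0$, while $\partial_\theta^2 w=\phi''(\theta)\log(R/s)$; hence
\[
\Delta w(se^{\im\theta}) \;=\; \frac{\phi''(\theta)\log(R/s)}{s^2}.
\]
I then apply Green's identity on $\Omega_{\eps,\eta}=R\bD\setminus\eps\bar\bD\setminus\bigcup_{j}B(\zeta_j,\eta)$, where $\zeta_j$ runs over the zeros of $F$ in $R\bD$ (counted once per zero). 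On $\Omega_{\eps,\eta}$ both $v$ and $w$ are smooth and $\Delta v=0$, so
\[
\int_{\Omega_{\eps,\eta}} v\,\Delta w\,\dd{A} \;=\; \int_{\partial\Omega_{\eps,\eta}}\bigl(v\,\partial_n w - w\,\partial_n v\bigr)\,\dd{s}.
\]
On $|z|=R$ one has $w\equiv 0$ and $\partial_n w=-\phi(\theta)/R$, producing $-\int\phi(\theta)\log|F(Re^{\im\theta})|\,\dd{\theta}$. On $|z|=\eps$ the term $\int v\,\partial_n w\,\dd{s}$ tends to $2\pi\log|F(0)|\avg{\phi}$ as $\eps\to 0$ (using $F(0)\ne 0$), while $\int w\,\partial_n v\,\dd{s}=O(\eps\log(1/\eps))\to 0$. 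Around each zero $\zeta_j$ of multiplicity $m_j$, the term $\int v\,\partial_n w\,\dd{s}=O(\eta\log\eta)\to 0$ while $\int w\,\partial_n v\,\dd{s}\to -2\pi m_j\phi(\arg\zeta_j)\log(R/|\zeta_j|)$, thanks to the standard residue computation $\partial_n\log|F|\sim -m_j/\eta$ on $|z-\zeta_j|=\eta$.

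Passing to the limits $\eps,\eta\to 0$ and dividing by $2\pi$ yields
\[
\sum_{\zeta_j\in R\bD}\phi(\arg\zeta_j)\log\frac{R}{|\zeta_j|}
\;=\; \int_{-\pi}^\pi\phi(\theta)\bigl[\log|F(Re^{\im\theta})|-\log|F(0)|\bigr]\,\frac{\dd{\theta}}{2\pi}
+ \int_0^R \frac{\log(R/s)}{s}\!\int_{-\pi}^\pi\phi''(\theta)\log|F(se^{\im\theta})|\,\frac{\dd{\theta}}{2\pi}\,\dd{s}.
\]
The left-hand side equals $\int_0^R n_F(t,\phi)\,t^{-1}\,\dd{t}$ by writing $\log(R/|\zeta_j|)=\int_{|\zeta_j|}^R t^{-1}\,\dd{t}$ and exchanging sum and integral. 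For the correction integral, Fubini gives
\[
\int_0^R\frac{\dd{t}}{t}\int_0^t \frac{f(s)}{s}\,\dd{s}
\;=\; \int_0^R f(s)\,\frac{\dd{s}}{s}\int_s^R\frac{\dd{t}}{t}
\;=\; \int_0^R f(s)\,\frac{\log(R/s)}{s}\,\dd{s},
\]
which converts our Green's identity output into the exact form stated in the lemma.

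The only real obstacle is the careful passage to the limits at the small circles around $0$ and around the zeros, together with verifying that the double integrand $\phi''(\theta)\log|F(se^{\im\theta})|\cdot\log(R/s)/s$ is absolutely integrable on $[0,R]\times[-\pi,\pi]$ so that Fubini applies; both are routine since $\log|F|$ is locally integrable (with only logarithmic singularities at the zeros) and $\log(R/s)/s$ is integrable near $0$. Everything else is mechanical bookkeeping of boundary terms in Green's formula.
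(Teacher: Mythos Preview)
Your approach is essentially the same as the paper's: Green's identity applied to $U=\tfrac1{2\pi}\log|F|$ and $V=\phi(\theta)\log(R/r)$ on the disk with small disks around the origin and the zeros removed. The paper simply omits the boundary bookkeeping you carry out.

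One minor slip: you assert that $\log(R/s)/s$ is integrable near $0$; it is not. The area integral $\int v\,\Delta w\,\dd A$ and the Fubini step are nevertheless fine because the $\theta$-average $f(s)=\int_{-\pi}^\pi\phi''(\theta)\log|F(se^{\im\theta})|\,\tfrac{\dd\theta}{2\pi}$ vanishes to first order at $s=0$ (since $\int\phi''=0$ and $\log|F(se^{\im\theta})|-\log|F(0)|=O(s)$), so $f(s)\log(R/s)/s$ is $O(\log(R/s))$ and integrable. This is exactly the content of the remark following the lemma in the paper, and is what legitimises both the limit $\eps\to 0$ in the area integral and the interchange of the $s$- and $t$-integrations.
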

\begin{remark*}
The repeated integral of the function
\[
s \mapsto \int_{-\pi}^\pi \phi^\dprime(\theta) \log | F(s e^{i \theta})| \frac{\dd{\theta}}{2 \pi}
\]
on the RHS converges absolutely at $s=0,\,t=0$, since for $s\to 0$,
\begin{equation*}
\int_{-\pi}^\pi \phi^\dprime(\theta) \log | F(s e^{i \theta})| \frac{\dd{\theta}}{2 \pi} =
\int_{-\pi}^\pi \phi^\dprime(\theta) \left[ \log | F(s e^{i \theta})| - \log |F(0)| \right] \frac{\dd{\theta}}{2 \pi} = O(s) \,.
\end{equation*}
\end{remark*}

\medskip\noindent{\em Proof of Lemma~\ref{lemma:gen_Jensen_formula}}:
For $C^2$-functions $U,V$ on a bounded domain $G$ with smooth boundary, Green's identity states that
\begin{equation*}
\iint_{G} ( U \lapl V - V \lapl U ) \, \dd{A} =
\int_{\partial G} \left( U \frac{\partial V}{\partial n} - V \frac{\partial U}{\partial n} \right) \dd{S}\,,
\end{equation*}
where $A$ stands for the planar area measure and $S$ for the length.

We set $U(r, \theta) = \frac{1}{2 \pi} \log |F(r e^{i \theta})|$ and $V(r,\theta) = \phi(\theta) \log \frac{R}{r}$. These functions are not in $C^2$, but their singularities can be handled by a standard device:
first, we exclude from the disk $R\,\bar\bD$ $\varepsilon$-neighbourhoods of zeroes of
$F$ and of the origin, then apply Green's formula and let $\varepsilon\to 0$.
The rest is a straightforward computation. \hfill $\Box$

\section{Proof of Theorem~\ref{thm1}}

\subsection{}
By Jensen's formula,
\begin{eqnarray*}
\int_{R_1}^{R_2} \frac{n_F(t)}t\, {\rm d}t &=&
\int_{-\pi}^\pi \bigl[ \log | F(R_2 e^{{\rm i} \theta})| - \log |F(R_1 e^{{\rm i}\theta})| \bigr]\,
\frac{\dd{\theta}}{2 \pi} \\[7pt]
&=& \bigl[ \log\sigma_F(R_2) - \log \sigma_F(R_1) \bigr] +
\int_{-\pi}^\pi \bigl[  X_{R_2}(\theta) - X_{R_1}(\theta) \bigr]\, \frac{\dd{\theta}}{2 \pi} \\[7pt]
&=& \int_{R_1}^{R_2} \frac{s_F(t)}t\, {\rm d}t
+ \int_{-\pi}^\pi \bigl[  X_{R_2}(\theta) - X_{R_1}(\theta) \bigr]\, \frac{\dd{\theta}}{2 \pi}\,.
\end{eqnarray*}
We define the sequence $r_k\uparrow\infty$ so that $s_F(r_k)=k^2$ and put $\delta_k = k^{-1} \log^{-2} k$, $k\ge 2$.
The set
\[
E = [1, r_2e^{\delta_2}] \cup\, \bigcup_{k\ge 3} \left[ r_k e^{-\delta_{k-1}}, r_k e^{\delta_k}\right]
\] will be the exceptional set of finite logarithmic length.
Note that we will be interested only in the intervals $[ r_k, r_{k+1} ]$ whose logarithmic length is
not small: $\log (r_{k+1}/r_k) \ge 2\delta_k$. Otherwise, the whole interval  $[ r_k, r_{k+1} ]$ is contained in
the exceptional set $E$.

\subsection{}
Given $r\in [1, \infty)\setminus E$, we choose $k$ so that $r_k e^{\delta_k} \le r \le r_{k+1}e^{-\delta_k}$. Then
\begin{eqnarray*}
n_F(r) &\le& n_F(r_{k+1}e^{-\delta_k}) \le \frac1{\delta_k} \int_{r_{k+1}e^{-\delta_k}}^{r_{k+1}} \frac{n_F(t)}{t} \,{\rm d} t \\[7pt]
&=& \frac1{\delta_k}\, \int_{r_{k+1}e^{-\delta_k}}^{r_{k+1}} \frac{s_F(t)}{t} \, {\rm d} t \ + \
\frac1{\delta_k} \int_{-\pi}^\pi \bigl[ X_{r_{k+1}}(\theta) - X_{r_{k+1}e^{-\delta_k}}(\theta) \bigr] \,
\frac{\dd{\theta}}{2 \pi} \\[7pt]
&\le& s_F(r_{k+1}) + \frac1{\delta_k} \bigl[ \| X_{r_{k+1}}\|_1  + \| X_{r_{k+1}e^{-\delta_k}} \|_1 \bigr]\,.
\end{eqnarray*}
Similarly,
\[
n_F(r) \ge s_F(r_k) - \frac1{\delta_k} \bigl[ \| X_{r_k}\|_1  + \| X_{r_{k}e^{\delta_k}} \|_1 \bigr]\,.
\]
Combining these bounds and using the monotonicity of the function $s_F$, we get
\begin{multline}\label{eq:ast}
\bigl| n_F(r) - s_F(r) \bigr| \le \bigl[ s_F(r_{k+1}) - s_F(r_k) \bigr] \\[7pt]
+  \frac1{\delta_k} \bigl[ \| X_{r_k}\|_1  + \| X_{r_{k}e^{\delta_k}} \|_1 +
\| X_{r_{k+1}}\|_1  + \| X_{r_{k+1}e^{-\delta_k}} \|_1 \bigr]\,.
\end{multline}
Since $s_F(r_k)=k^2$, we have
$  s_F(r_{k+1}) - s_F(r_k) = 2k+1 $.
Applying H\"older's inequality and then Lemma~\ref{lemma:log_integ}, we see that, for any $r\ge 1$ and any $p<\infty$,
\[
\cE \bigl\{ \| X_r \|_1^p \bigr\} \le \cE \bigl\{ \| X_r \|_p^p \bigr\} \le (Cp)^{6p}\,,
\]
whence
\[
\cP \bigl\{ \| X_r \|_1 > t \bigr\} \le t^{-p} \cE \bigl\{ \| X_r \|_1^p \bigr\}
\le \bigl( t^{-1} \cdot Cp^6 \bigr)^p\,.
\]
Letting $t= e \cdot Cp^6$ and $p=2\log k$, we get
\[
\cP \bigl\{ \| X_r \|_1 > C \log^6 k \} \le \frac1{k^2}\,.
\]
Therefore, by the Borel-Cantelli lemma, for almost every $\omega\in\Omega$,
there exists $k_0(\omega)$ such that, for $k\ge k_0(\omega)$,
\[
\frac1{\delta_k} \bigl[ \| X_{r_k}\|_1  + \| X_{r_{k}e^{\delta_k}} \|_1 +
\| X_{r_{k+1}}\|_1  + \| X_{r_{k+1}e^{-\delta_k}} \|_1 \bigr] = k \log^2k \cdot O(\log^6 k) = O(k \log^8 k).
\]
Hence,
\[
\bigl| n_F(r) - s_F(r) \bigr| \le O(k \log^8 k) = O_\gamma(s_F(r)^\gamma)\,.
\]
This proves the first part of Theorem~\ref{thm1}.

\subsection{}
The proof of the second part (that is, the estimate for $\cE | n_F(r)-s_F(r)|$) is simlar.
Averaging the upper bound~\eqref{eq:ast} and then using Lemma~\ref{lemma:log_integ}, we get
\begin{multline*}
\cE \bigl| n_F(r)-s_F(r) \bigr| \le
\bigl[ s_F(r_{k+1}) - s_F(r_k) \bigr]
+  \frac1{\delta_k} \cE \bigl[ \| X_{r_k}\|_1  + \| X_{r_{k}e^{\delta_k}} \|_1 +
\| X_{r_{k+1}}\|_1  + \| X_{r_{k+1}e^{-\delta_k}} \|_1 \bigr] \\[7pt]
\le 2k+1 + Ck\log^2 k = O_\gamma \bigl( s_F(r)^\gamma \bigr)\,.
\end{multline*}
This completes the proof of Theorem~\ref{thm1} \hfill $\Box$

\subsection{Remark on the notion of ``smallness'' of an exceptional set $E$}
While the notion of smallness we used (finite logarithmic measure) is standard and convenient
for most applications, the proof shows a bit more. Namely, our exceptional set $E$
can be covered by intervals whose logarithmic lengths form a fixed decreasing sequence with a finite sum ($(k\log^2 k)^{-1}$
in our case). Replacing the particular choice of parameters used in the proof of Theorem~\ref{thm1} by a free one,
we can fix an arbitrary increasing  convex sequence $(\la_k)$, $\la_1>1$,
and take the points
$r_k$ so that $s_F(r_k)=\la_k$. Put
\[
\delta_k = \frac{\log^6 (k+1)}{\la_{k+1}-\la_k}\,.
\]
Then, with probability $1$, we get
\[
\bigl| n_F(r) - s_F(r) \bigr| \le C (\la_{k+1}-\la_k)
\quad {\rm for\ } r_k e^{\delta_k} \le r \le r_{k+1} e^{-\delta_k} \ {\rm and\ large\ enough\ } k\,.
\]
Choosing various sequences $\la_k$, we get statements similar to Theorem~\ref{thm1}
in which better control of the exceptional set $E$ can be achieved at the cost of
worse control of the error term. Note that since we cannot control the sequence $r_k$
without any a priori knowledge about the growth of $F$, a result of this type is
meaningful only when $\sum_k \delta_k < \infty$ (otherwise, the exceptional intervals
$[r_k e^{-\delta_{k-1}}, r_ke^{\delta_k}]$ may cover the whole ray $[r_1, +\infty)$).
This forces us to take $\la_k$ of order $k^2$ at the very least. So, Theorem~\ref{thm1}, as stated, is, in a sense, an extremal case.

Also note that the considerations of Section~\ref{subsub:lacunary} show that each result of
this type is essentially sharp up to a factor $\log^6(k+1)$ in the definition of $\delta_k$,
which comes from the Borel-Cantelli estimate.

\section{Several lemmas}

Here, we collect several lemmas needed for the proof of Theorem~\ref{thm2}.
The first lemma is a straightforward corollary to the Jensen-type formula
given in Lemma~\ref{lemma:gen_Jensen_formula}.
\begin{lemma}\label{lemma:corol_to_Jensen}
Let $F$ be a random entire function represented by Rademacher Taylor series with $F(0)\ne 0$.
Then, for any $2\pi$-periodic $C^2$-function $\phi$ and every
$0<R_1<R_2<\infty$, we have
\begin{equation}\label{int_avg_n_F_formula}
\int_{R_1}^{R_2} \frac{\cE n_F(t,\phi)}{t} \, \dd{t} =
\avg{\phi} \int_{R_1}^{R_2} \frac{s_F(t)}{t} \, \dd{t} + \cE \avg{\phi \cdot \left(  X_{R_2} -  X_{R_1} \right)}
+ \int_{R_1}^{R_2} \frac{\dd{t}}{t} \int_0^t \frac{\cE \avg{\phi^\dprime X_s} \dd{s}}{s} \,,
\end{equation}
and
\begin{equation}\label{int_n_F_formula}
\int_{R_1}^{R_2} \frac{n_F(t,\phi)}{t} \, \dd{t} = \int_{R_1}^{R_2} \frac{\cE n_F(t, \phi)}{t} \, \dd{t} +
\avg{\phi \cdot \left( \wbar{X}_{R_2} - \wbar{X}_{R_1}\right)}
+ \int_{R_1}^{R_2} \frac{\dd{t}}{t} \int_0^t \frac{\avg{\phi^\dprime \, \wbar{X}_s} \dd{s}}{s}\,.
\end{equation}
\end{lemma}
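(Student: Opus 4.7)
The plan is to apply the Jensen-type formula of Lemma~\ref{lemma:gen_Jensen_formula} at the two radii $R_1$ and $R_2$ and subtract, then substitute the decomposition $\log|F(re^{\im\theta})| = \log\sigma_F(r) + X_r(\theta)$ into both the boundary term and the innermost double integral on the right-hand side. This reduces the identity to a form whose only $F$-dependent pieces are $\log\sigma_F(r)$ and $X_r(\theta)$.

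Two elementary observations then do all the work. First, because $\phi$ is $2\pi$-periodic and $C^2$, $\avg{\phi''}=0$ (the antiderivative $\phi'$ is periodic), so when we split $\log|F(se^{\im\theta})| = \log\sigma_F(s) + X_s(\theta)$ inside the angular integral $\avg{\phi''\log|F(se^{\im\theta})|}$, the deterministic radial piece is annihilated and only $\avg{\phi''X_s}$ survives. Second, by the very definition $s_F(r) = \dd\log\sigma_F(r)/\dd\log r$, we have $\log\sigma_F(R_2) - \log\sigma_F(R_1) = \int_{R_1}^{R_2} s_F(t)/t\,\dd t$. Putting these together yields the pathwise (i.e.\ $\omega$-fixed) identity
\[
\int_{R_1}^{R_2}\frac{n_F(t,\phi)}{t}\,\dd t \ = \ \avg{\phi}\int_{R_1}^{R_2}\frac{s_F(t)}{t}\,\dd t \ + \ \avg{\phi(X_{R_2}-X_{R_1})} \ + \ \int_{R_1}^{R_2}\frac{\dd t}{t}\int_0^t \frac{\avg{\phi''X_s}\,\dd s}{s}\,.
\]
Taking expectation gives~\eqref{int_avg_n_F_formula}. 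Subtracting the averaged identity from the pathwise one makes the deterministic $\avg{\phi}\int s_F/t\,\dd t$ drop out and replaces each $X$ by its centring $\wbar{X} = X - \cE X$; this is exactly~\eqref{int_n_F_formula}.

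The only point requiring care is the justification of Fubini, both for exchanging $\cE$ with the $(t,s,\theta)$-integrations and for pulling $\cE$ inside $\int_{R_1}^{R_2} n_F(t,\phi)\,\dd t/t$. This is handled by Lemma~\ref{lemma:log_integ}, which provides $\cE\|X_s\|_1 \le C$ uniformly in $s>0$, together with the $O(s)$ bound on $\avg{\phi''X_s}$ near $s=0$ recorded in the Remark following Lemma~\ref{lemma:gen_Jensen_formula}, which secures absolute convergence of the $\dd s/s$-integral at the origin. With these two ingredients, all interchanges are routine; no further estimation is needed beyond the formal manipulation described above.
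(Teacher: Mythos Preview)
Your proof is correct and is precisely the ``straightforward corollary to the Jensen-type formula'' that the paper announces but does not spell out: subtract Lemma~\ref{lemma:gen_Jensen_formula} at $R_2$ and $R_1$, insert $\log|F|=\log\sigma_F+X$, use $\avg{\phi''}=0$ and $\log\sigma_F(R_2)-\log\sigma_F(R_1)=\int_{R_1}^{R_2}s_F(t)\,\dd t/t$, then take expectation for~\eqref{int_avg_n_F_formula} and subtract for~\eqref{int_n_F_formula}. Your Fubini justification via Lemma~\ref{lemma:log_integ} and the $O(s)$ remark is exactly what is needed.
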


\medskip
The next lemma gives an approximation of the Taylor series $F$ by  ``the central group'' of its terms.
We recall that the maximal term and the central index of the Taylor series $F$ are defined  as
\[
\mu _F(r) = \max_{k\ge 0} \bigl\{ |a_k|r^k \bigr\} \quad {\rm and} \quad \nu_F(r) = \max \bigl\{ k \colon |a_k|r^k = \mu_F(r) \bigr\} \,.
\]

\begin{lemma}\label{approx_by_main_terms}
Given $r\ge 1$ and $\tau > 0$, we write $\nu_{-} = \nu_F(r e^{-\tau})$, $\nu_{+} = \nu_F(r e^{\tau})$.
Then
\[
\Bigl| F(z) - \sum_{\nu_{-} \le k \le  \nu_{+}} \xi_k a_k z^k \Bigr| \le
\frac{2 \sigma_F(r)}{e^\tau - 1} \,, \qquad r=|z|\,.
\]
\end{lemma}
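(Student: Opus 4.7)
The plan is to split the ``tail'' $F(z) - \sum_{\nu_-\le k\le\nu_+} \xi_k a_k z^k$ into a low-frequency part $\sum_{k<\nu_-}\xi_k a_k z^k$ and a high-frequency part $\sum_{k>\nu_+}\xi_k a_k z^k$, and to bound each term of each part in absolute value by a geometric factor times $\sigma_F(r)$. Since $|\xi_k a_k z^k|=|a_k| r^k$ on the circle $|z|=r$, the randomness plays no role; this is a purely deterministic estimate controlled by the Newton-polygon behavior of the sequence $\{|a_k|r^k\}$.

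For the low-frequency part, the idea is to evaluate the terms at the smaller radius $re^{-\tau}$, where by definition of the central index one has $|a_k|(re^{-\tau})^k\le \mu_F(re^{-\tau})=|a_{\nu_-}|(re^{-\tau})^{\nu_-}$ for every $k\le\nu_-$. Multiplying back by $e^{\tau k}$ yields
\[
|a_k|r^k\le |a_{\nu_-}|r^{\nu_-}\, e^{-\tau(\nu_--k)},\qquad k<\nu_-.
\]
Summing the geometric series in $j=\nu_--k\ge 1$ gives $\sum_{k<\nu_-}|a_k|r^k\le |a_{\nu_-}|r^{\nu_-}/(e^\tau-1)$. The analogous argument at radius $re^\tau$ handles the high-frequency part and produces $\sum_{k>\nu_+}|a_k|r^k\le |a_{\nu_+}|r^{\nu_+}/(e^\tau-1)$.

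To finish, I would use the elementary inequality $|a_m|r^m\le \mu_F(r)\le \sigma_F(r)$ (the second bound being $\sigma_F(r)^2=\sum_k|a_k|^2r^{2k}\ge|a_m|^2r^{2m}$ for any $m$) applied at $m=\nu_-$ and $m=\nu_+$, adding the two tail estimates to obtain the claimed $2\sigma_F(r)/(e^\tau-1)$.

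There is no real obstacle here; the only point that requires a moment's thought is why the central-index inequality $|a_k|(re^{-\tau})^k\le|a_{\nu_-}|(re^{-\tau})^{\nu_-}$ is available for \emph{every} $k<\nu_-$, which is immediate since $\nu_-=\nu_F(re^{-\tau})$ is defined so that the maximum of $|a_k|(re^{-\tau})^k$ is attained at $k=\nu_-$. Everything else is a geometric-series bookkeeping exercise.
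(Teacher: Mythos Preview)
Your proof is correct and follows essentially the same approach as the paper: split into low- and high-frequency tails, use the defining property of the central index at radii $re^{\pm\tau}$ to get geometric decay, and sum. Your final step is in fact a touch cleaner than the paper's: where you bound $|a_{\nu_-}|r^{\nu_-}$ and $|a_{\nu_+}|r^{\nu_+}$ each by $\sigma_F(r)$ directly and add, the paper passes through $a+b\le\sqrt{2}\sqrt{a^2+b^2}\le\sqrt{2}\cdot\sqrt{2}\,\sigma_F(r)$, arriving at the same constant $2$ by a slightly longer route.
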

\begin{proof}
By the definition of the indices $\nu_\pm$, we have
\[
|a_k|r^k = |a_k| (re^{-\tau})^k e^{\tau k}
\le |a_{\nu_-}| (re^{-\tau})^{\nu_-} e^{\tau k}
= |a_{\nu_{-}}| r^{\nu_{-}} e^{-\tau({\nu_{-}} - k)}\,, \qquad 0\le k \le {\nu_{-}} \,,
\]
and similarly,
$ |a_k|r^k \le |a_{\nu_{+}}| r^{\nu_{+}} e^{-\tau(k - {\nu_{+}})} $ for $k \ge {\nu_{+}}$.
Therefore,
\begin{multline*}
\Bigl( \sum_{0\le k < \nu_{-}} + \sum_{k > \nu_{+}} \Bigr) |a_k| r^k \le
\left( |a_{\nu_{-}}| r^{\nu_{-}} + |a_{\nu_{+}}| r^{\nu_{+}} \right) \frac{1}{e^\tau - 1} \\
 \le  \frac{\sqrt{2}}{e^\tau - 1} \left( |a_{\nu_{-}}|^2 r^{2 \nu_{-}} + |a_{\nu_{+}}|^2 r^{2 \nu_{+}} \right)^{\frac12}
 \le \frac{2 \sigma_F(r)}{e^\tau - 1}
\end{multline*}
proving the lemma.
\end{proof}

\medskip
Our last lemma is a simple application of the Borel-Cantelli Lemma.
\begin{lemma}\label{lemma:BC}
Let $Y_k$ be a sequence of random variables such that, for every $p\ge p_0$,
\begin{equation}\label{cnd_bnd_for_Y_k}
\cE \left| Y_k \right|^p \le \left( G_k(p) \right)^p \,,
\end{equation}
where $p \mapsto G_k(p)$ is a sequence of increasing functions on $\left[ 1, \infty \right)$.
Then, almost surely,
\[ \limsup_{k\to\infty} \frac{|Y_k|}{G_k(\log k)} \le e\,. \]
\end{lemma}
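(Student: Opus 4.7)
The plan is to run a Markov/Chebyshev estimate with the exponent $p$ tuned to $k$, and then sum over $k$ to invoke Borel--Cantelli. The key observation is that if we set $p = \log k$, then the hypothesis $\cE |Y_k|^p \le G_k(p)^p$ lets us compare $|Y_k|$ against $G_k(\log k)$ on the logarithmic scale, and the Chebyshev exponent $p$ exactly converts a multiplicative slack into a power of $k$. Fix $c > e$. For $k$ large enough that $\log k \ge p_0$, Markov's inequality gives
\[
\cP\bigl\{ |Y_k| > c\, G_k(\log k) \bigr\}
\le \frac{\cE |Y_k|^{\log k}}{\bigl( c\, G_k(\log k) \bigr)^{\log k}}
\le \left( \frac{G_k(\log k)}{c\, G_k(\log k)} \right)^{\log k}
= c^{-\log k} = k^{-\log c}\,.
\]
Since $c > e$ forces $\log c > 1$, the series $\sum_k k^{-\log c}$ converges.

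Then by the Borel--Cantelli lemma, almost surely only finitely many of the events $\{|Y_k| > c\, G_k(\log k)\}$ occur, so $\limsup_{k\to\infty} |Y_k|/G_k(\log k) \le c$ almost surely. To upgrade from ``$\le c$ for each $c > e$'' to ``$\le e$'', I would simply intersect the full-measure events corresponding to a countable sequence $c_n = e + \tfrac{1}{n}$. The intersection is still of full measure, and on it the limsup is at most $c_n$ for every $n$, hence at most $e$.

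There is no real obstacle here: the monotonicity of $G_k$ is not actually used for the Markov step, only for the interpretation of ``$G_k(\log k)$'' as meaningful once $\log k \ge p_0$. The only subtlety is the one at the end—being careful that the null set on which the bound fails depends on $c$, so we must rationalize over a countable decreasing sequence $c_n \downarrow e$ before taking the intersection. This is a routine measure-theoretic step, and once handled, the lemma follows immediately from the power-series moment bound and summability of $k^{-\log c}$ for $c>e$.
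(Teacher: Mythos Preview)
Your proof is correct and follows essentially the same approach as the paper: apply Chebyshev's inequality with $p=\log k$, observe that the resulting tail bound $k^{-\log c}$ (equivalently $k^{-\eta}$ with $c=e^\eta$) is summable for $c>e$, invoke Borel--Cantelli, and then let $c\downarrow e$ along a countable sequence. Your treatment is slightly more explicit about the constraint $\log k\ge p_0$ and about the countable intersection needed to pass to the limit, but otherwise the arguments coincide.
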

\begin{proof}
Let $\eta > 1, t > 0$. By Chebyshev's inequality,
\[
\cP \{ \left| Y_k \right| >  t \} \le \frac{\cE \left| Y_k \right|^p}{t^p} \le \left( \frac{G_k(p)}{t} \right)^p\,.
\]
Choosing $t = e^\eta G_k(p)$ and $p = \log k$, we see that
\[
\cP \{ \left| Y_k \right| >  e^\eta G_k(\log k) \} \le k^{-\eta}\,.
\]
Then, by the Borel-Cantelli Lemma,
almost surely,
\[ \limsup_{k\to\infty} \frac{|Y_k|}{G_k(\log k)} \le e^\eta\,. \]
Letting $\eta \to 1$, we get the result.
\end{proof}

\section{Proof of Theorem~\ref{thm2}}
The idea of the proof is similar to the one for Theorem~\ref{thm1}: we need to find a sufficiently dense
sequence of ``interpolation points'' $r_k$ where $n_F(r_k, \phi)$ is well approximated by $\avg{\phi} s_F(r_k)$.
The proof of the almost sure bound is significantly more complicated since we have to control the error term
\[
\int_1^r \frac{\avg{\phi^\dprime \, \wbar{X}_s} \dd{s}}{s}\,,
\]
which requires a new idea when the value $s_F(r)$ is comparable to or less than $\log r$.

\medskip
We start by introducing two sequences $r_k \uparrow \infty, k\ge 3$ and $\delta_k \downarrow 0,\, \sum_k \delta_k < \infty$, to be chosen later.
The set
\[
E = [1, r_3] \cup\, \bigcup_{k\ge 3} \left[ r_k e^{-\delta_{k-1}}, r_k e^{\delta_k}\right]
\]
will serve as
our  exceptional set of finite logarithmic length.
Below, we always assume that $\log \frac{r_{k+1}}{r_k} > 2 \delta_k$;
otherwise, the whole interval $[r_k, r_{k+1}]$ is contained in the exceptional set $E$.

\medskip
Till the end of the proof, we fix some $q_0 > 1$ and put
$p_0 = \frac{q_0}{q_0 - 1}$.

\subsection{Preliminary estimates}
We use the following notation:
\[
Q(t) = Q(t; \phi) \eqdef \int_1^t \avg{\phi^\dprime X_s} \frac{\dd{s}}{s},
\quad \wbar{Q}(t) = Q(t) - \cE Q(t) = \int_1^t \avg{\phi^\dprime \overline{X}_s} \frac{\dd{s}}{s}\,.
\]
The next two claims approximate the functions $n_F$ and $\cE n_F$ outside the exceptional set.

\begin{claim}\label{clm_approx_for_n_F}
Suppose that $r \in [r_3, \infty] \setminus E$. Choose $k$ so that $r_k e^{\delta_k} \le r \le r_{k+1} e^{-\delta_k}$. Then
\[
|n_F(r,\phi) - \cE n_F(r,\phi)| \le \left[ s_F(r_{k+1}) - s_F(r_k)\right] + \left| \wbar{Q}(r_k) \right| + \left| \wbar{Q}(r_{k+1}) \right| + {\rm ET}_1 + {\rm ET}_2 \,,
\]
where the error terms ${\rm ET}_1$ and ${\rm ET}_2$ are given by
\begin{multline*}
{\rm ET}_1 = \frac{1}{\delta_k} \bigr[ \| \wbar{X}_{r_k} \|_1 + \| \wbar{X}_{r_k e^{\delta_k}} \|_1 + \| \wbar{X}_{r_{k+1} e^{-\delta_k}} \|_1 + \| \wbar{X}_{r_{k+1}} \|_1 \bigr] \\
+ \| \phi^\dprime \|_{q_0} \Bigl( \int_{r_k}^{r_k e^{\delta_k}} + \int_{r_{k+1} e^{-\delta_k}}^{r_{k+1}} \Bigr) \| \wbar{X}_s \|_{p_0} \frac{\dd{s}}{s} \, ,
\end{multline*}
and
\[
{\rm ET}_2 = C\,\bigl( \tfrac{1}{\delta_k} + \| \phi^\dprime \|_1 \bigr) \,.
\]
\end{claim}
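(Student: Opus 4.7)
The plan is to sandwich both $n_F(r,\phi)$ and $\cE n_F(r,\phi)$ between logarithmic averages on short intervals near $r_k$ and $r_{k+1}$, rewrite those averages using the Jensen-type identities of Lemma~\ref{lemma:corol_to_Jensen}, and then subtract term by term. Since $\phi\ge 0$, both $t\mapsto n_F(t,\phi)$ and $t\mapsto\cE n_F(t,\phi)$ are non-decreasing; combined with $r\in[r_k e^{\delta_k},r_{k+1}e^{-\delta_k}]$ and $\log(r_{k+1}/r_k)\ge 2\delta_k$, monotonicity gives
\[
n_F(r,\phi)\le \frac{1}{\delta_k}\int_{r_{k+1}e^{-\delta_k}}^{r_{k+1}}\frac{n_F(t,\phi)}{t}\,\dd t,\qquad \cE n_F(r,\phi)\ge \frac{1}{\delta_k}\int_{r_k}^{r_k e^{\delta_k}}\frac{\cE n_F(t,\phi)}{t}\,\dd t,
\]
and subtracting yields one direction of the claim. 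The reverse direction is obtained by swapping the roles of the two short intervals.

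Next I would expand each integral via Lemma~\ref{lemma:corol_to_Jensen}, which produces three contributions: (i) a main term $\avg{\phi}\,[\log\sigma_F(R_2)-\log\sigma_F(R_1)]/\delta_k$; (ii) a boundary fluctuation $\delta_k^{-1}\avg{\phi(X_{R_2}-X_{R_1})}$ (or its expectation); and (iii) an iterated integral $\delta_k^{-1}\int_{R_1}^{R_2}\frac{\dd t}{t}\int_0^t\avg{\phi^\dprime X_s}\frac{\dd s}{s}$ (or with $\cE$). Comparing the upper and lower expressions term by term: for (i), monotonicity of $s_F$ bounds each logarithmic average between the endpoint values of $s_F$, giving a net contribution of at most $\avg{\phi}(s_F(r_{k+1})-s_F(r_k))\le s_F(r_{k+1})-s_F(r_k)$; for (ii), splitting $X=\wbar{X}+\cE X$ produces the random piece $\delta_k^{-1}\sum\|\wbar{X}_\cdot\|_1$ of ET$_1$, together with a deterministic remainder bounded by $4\delta_k^{-1}\sup_r|\cE\avg{\phi X_r}|\le C/\delta_k$ via Lemma~\ref{lemma:log_integ} applied with $p=1$, which furnishes the $1/\delta_k$ summand of ET$_2$.

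The heart of the argument is step (iii). Writing $\int_0^t\avg{\phi^\dprime X_s}\frac{\dd s}{s}=C_\phi+Q(t)$ with $C_\phi=\int_0^1\avg{\phi^\dprime X_s}\frac{\dd s}{s}$, the random part of the difference of iterated integrals reduces to $\wbar{C}_\phi+\delta_k^{-1}\int_{r_{k+1}e^{-\delta_k}}^{r_{k+1}}\frac{\wbar{Q}(t)}{t}\,\dd t$. Substituting $\wbar{Q}(t)=\wbar{Q}(r_{k+1})-\int_t^{r_{k+1}}\avg{\phi^\dprime\wbar{X}_s}\frac{\dd s}{s}$ and exchanging the order of integration exposes $\wbar{Q}(r_{k+1})$ together with a short-range tail bounded via H\"older by $\|\phi^\dprime\|_{q_0}\int_{r_{k+1}e^{-\delta_k}}^{r_{k+1}}\|\wbar{X}_s\|_{p_0}\frac{\dd s}{s}$; the swapped sandwich produces $\wbar{Q}(r_k)$ and the corresponding tail near $r_k$. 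To control $\wbar{C}_\phi$ I invoke the normalization $\sum_{k\ge 1}|a_k|\le\tfrac12$: this yields $\|X_s\|_\infty\le Cs$ uniformly in $\omega$ for $s\in[0,1]$, hence $|C_\phi|\le C\|\phi^\dprime\|_1$ and therefore $|\wbar{C}_\phi|\le C\|\phi^\dprime\|_1$, contributing the $\|\phi^\dprime\|_1$ summand of ET$_2$.

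The main obstacle is precisely step (iii): arranging the decomposition so that all the random mass of the iterated integrals is absorbed into the endpoint values $\wbar{Q}(r_k),\wbar{Q}(r_{k+1})$ plus the clean short-range tails of ET$_1$, while the residual deterministic corrections from comparing iterated integrals on distinct short intervals fold neatly into $C(\delta_k^{-1}+\|\phi^\dprime\|_1)$. Careful H\"older bookkeeping with the dual exponents $p_0,q_0$ and the near-origin uniform bound $\|X_s\|_\infty=O(s)$ provided by the normalization of the series are both essential for this.
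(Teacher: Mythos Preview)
Your sandwich is set up correctly, and the handling of the main term~(i), the boundary term~(ii), and the \emph{random} part of the iterated integral~(iii) is fine. The gap is in the deterministic part of~(iii). When you subtract the two expansions, you are left with
\[
\frac{1}{\delta_k}\Bigl[\int_{r_{k+1}e^{-\delta_k}}^{r_{k+1}} - \int_{r_k}^{r_k e^{\delta_k}}\Bigr]\frac{\dd{t}}{t}\int_0^t \cE\avg{\phi^\dprime X_s}\,\frac{\dd{s}}{s}
\ \approx\ \cE Q(r_{k+1})-\cE Q(r_k)
\ =\ \int_{r_k}^{r_{k+1}} \cE\avg{\phi^\dprime X_s}\,\frac{\dd{s}}{s}\,.
\]
Lemma~\ref{lemma:log_integ} only gives $|\cE\avg{\phi^\dprime X_s}|\le C(q_0)\|\phi^\dprime\|_{q_0}$, so this residue is of order $\|\phi^\dprime\|_{q_0}\log(r_{k+1}/r_k)$. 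At this point in the argument $r_k$ and $\delta_k$ are free parameters, and there is no reason for $\log(r_{k+1}/r_k)$ to be bounded by a constant multiple of $\delta_k^{-1}$; hence this term does \emph{not} ``fold neatly into $C(\delta_k^{-1}+\|\phi^\dprime\|_1)$'' and you cannot recover ${\rm ET}_2$ in the stated form.

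The paper avoids this by not sandwiching $\cE n_F(r,\phi)$ at all. Instead it subtracts $\cE n_F(r,\phi)$ directly from~\eqref{eq:A}, getting the single integral
\[
\frac{1}{\delta_k}\int_{r_{k+1}e^{-\delta_k}}^{r_{k+1}}\frac{\cE\bigl[n_F(t,\phi)-n_F(r,\phi)\bigr]}{t}\,\dd{t}\,,
\]
and then uses the pointwise inequality $n_F(t,\phi)-n_F(r,\phi)\le n_F(t)-n_F(r)$ (valid because $0\le\phi\le 1$) to replace the weighted count by the unweighted one. Applying Lemma~\ref{lemma:corol_to_Jensen} with $\phi\equiv 1$ now kills the iterated integral entirely (since $\phi^\dprime\equiv 0$), leaving only $[s_F(r_{k+1})-s_F(r_k)]+C/\delta_k$. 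This reduction to $\phi\equiv 1$ is the missing idea in your outline.
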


\begin{claim}\label{clm_approx_for_cE_n_F}
Under the assumptions of Claim~\ref{clm_approx_for_n_F}, we have
\[
\cE \bigl| n_F(r,\phi) - \avg{\phi} s_F(r) \bigr| \le  \bigl[ s_F(r_{k+1}) - s_F(r_k) \bigr]
 + C(q_0) \| \phi^\dprime \|_{q_0} \log r_{k+1}  + \frac{C}{\delta_k} \,.
\]
\end{claim}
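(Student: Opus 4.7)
The plan is to sandwich $n_F(r,\phi)$ using monotonicity of $t\mapsto n_F(t,\phi)$ (which holds since $\phi\ge 0$) and then feed each side into the Jensen-type formula of Lemma~\ref{lemma:gen_Jensen_formula} applied over the narrow windows adjacent to $r_k$ and $r_{k+1}$:
\[
\frac{1}{\delta_k}\int_{r_k}^{r_k e^{\delta_k}} \frac{n_F(t,\phi)}{t}\,\dd{t}
\le n_F(r,\phi) \le
\frac{1}{\delta_k}\int_{r_{k+1}e^{-\delta_k}}^{r_{k+1}} \frac{n_F(t,\phi)}{t}\,\dd{t}.
\]
Writing $[R_1,R_2]$ for either window, subtracting Lemma~\ref{lemma:gen_Jensen_formula} at $R_1$ from the same lemma at $R_2$, and using the key fact that $\avg{\phi^\dprime}=0$ (by periodicity) cancels the $\log\sigma_F(s)$ piece inside $\avg{\phi^\dprime\log|F(se^{\im\theta})|}=\avg{\phi^\dprime X_s}$, I obtain
\[
\int_{R_1}^{R_2}\frac{n_F(t,\phi)}{t}\,\dd{t}
= \avg{\phi}\int_{R_1}^{R_2}\frac{s_F(t)}{t}\,\dd{t}
+ \avg{\phi(X_{R_2}-X_{R_1})}
+ \int_{R_1}^{R_2}\frac{\dd{t}}{t}\int_0^t \frac{\avg{\phi^\dprime X_s}}{s}\,\dd{s}.
\]

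Dividing by $\delta_k$ and bounding the main term via monotonicity of $s_F$ ($\frac{1}{\delta_k}\int_{R_1}^{R_2}s_F/t\,\dd{t}$ lies between $s_F(R_1)$ and $s_F(R_2)$), the sandwich yields almost sure two-sided bounds
\[
\avg{\phi} s_F(r_k) - \eta_- \;\le\; n_F(r,\phi) \;\le\; \avg{\phi} s_F(r_{k+1}) + \eta_+,
\]
where $\eta_\pm$ are the signed random errors coming from the corresponding window. Since $\avg{\phi} s_F(r_k)\le \avg{\phi} s_F(r)\le \avg{\phi} s_F(r_{k+1})$, this rearranges pointwise to
\[
\bigl| n_F(r,\phi)-\avg{\phi} s_F(r)\bigr|\le \avg{\phi}\bigl(s_F(r_{k+1})-s_F(r_k)\bigr) + |\eta_+| + |\eta_-|,
\]
so after taking expectations and using $\avg{\phi}\le 1$ the claim reduces to showing $\cE|\eta_\pm|\le C/\delta_k + C(q_0)\|\phi^\dprime\|_{q_0}\log r_{k+1}$.

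For the boundary piece $\delta_k^{-1}\avg{\phi(X_{R_2}-X_{R_1})}$, H\"older with $\|\phi\|_\infty\le 1$ together with the $p=1$ case of Lemma~\ref{lemma:log_integ} (which gives $\cE\|X_R\|_1\le C$) produces the $C/\delta_k$ term. For the double integral, H\"older with the conjugate pair $(q_0,p_0)$ gives $|\avg{\phi^\dprime X_s}|\le \|\phi^\dprime\|_{q_0}\|X_s\|_{p_0}$, and Lemma~\ref{lemma:log_integ} with $p=p_0$ bounds $\cE\|X_s\|_{p_0}\le C(q_0)$ uniformly in $s\ge 1$. The $1/\delta_k$ prefactor exactly cancels the outer $\int_{R_1}^{R_2}\dd{t}/t=\delta_k$, and bounding $\log t\le \log r_{k+1}$ on the integration range delivers the $C(q_0)\|\phi^\dprime\|_{q_0}\log r_{k+1}$ contribution. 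The one point that needs care is the singularity of $\dd{s}/s$ at $s=0$ inside the inner integral; its absolute convergence follows from the $O(s)$ bound for $\avg{\phi^\dprime\log|F(se^{\im\theta})|}$ recorded in the remark after Lemma~\ref{lemma:gen_Jensen_formula}, a consequence of the $F(0)=1$ normalization.
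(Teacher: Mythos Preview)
Your argument is correct and follows essentially the same strategy as the paper: sandwich $n_F(r,\phi)$ by averages over the two narrow windows using monotonicity, rewrite each average via the Jensen-type formula (Lemma~\ref{lemma:gen_Jensen_formula}), use $\avg{\phi^\dprime}=0$ to replace $\log|F|$ by $X_s$ in the double integral, and then estimate the boundary and double-integral errors with Lemma~\ref{lemma:log_integ}. The only structural difference is that the paper first derives the pointwise bound~\eqref{eq:A} in the proof of Claim~\ref{clm_approx_for_n_F} (which separates $n_F$ from $\cE n_F$ via $\wbar X$ and $\wbar Q$) and then, for Claim~\ref{clm_approx_for_cE_n_F}, estimates the residual term $\delta_k^{-1}\int \cE n_F(t,\phi)\,\dd t/t$; you instead compare $n_F(r,\phi)$ directly to $\avg{\phi}s_F(r)$ without passing through $\cE n_F$. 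Your route is slightly more economical here, while the paper's detour through \eqref{eq:A} is motivated by the fact that \eqref{eq:A} is needed anyway for Claim~\ref{clm_approx_for_n_F}. One small remark: the absolute convergence of $\int_0^1\avg{\phi^\dprime X_s}\,\dd s/s$ does not really require $F(0)=1$; it follows from $\avg{\phi^\dprime}=0$ and analyticity of $F$ at $0$ (as in the remark you cite), though the normalization does give the convenient deterministic bound $\|X_s\|_\infty\le Cs$ for $s\le 1$ that makes the constant explicit.
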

\begin{proof}[Proof of Claim~\ref{clm_approx_for_n_F}]
By the monotonicity of the function $n_F(r, \phi)$,
\begin{multline*}
n_F(r,\phi) \le  n_F(r_{k+1}e^{-\delta_k} ,\phi) \le \frac{1}{\delta_k} \int_{r_{k+1}e^{-\delta_k}}^{r_{k+1}} \frac{n_F(t,\phi)}{t} \, \dd{t} \\[5pt]
\quad \overset{\eqref{int_n_F_formula}}{=}\frac{1}{\delta_k} \int_{r_{k+1}e^{-\delta_k}}^{r_{k+1}} \frac{\cE n_F(t,\phi)}{t} \, \dd{t} + \frac{1}{\delta_k} \avg{\phi \cdot \left( \wbar{X}_{r_{k+1}} - \wbar{X}_{r_{k+1}e^{-\delta_k}} \right)}
+\,  \frac{1}{\delta_k} \int_{r_{k+1}e^{-\delta_k}}^{r_{k+1}} \frac{\dd{t}}{t} \int_0^t \frac{\dd{s}}{s} \avg{\phi^\dprime\, \wbar{X}_s}\,.
\end{multline*}
Since $0\le \phi \le 1$, the second term on the RHS does not exceed
\[ \frac{1}{\delta_k} \left( \| \wbar{X}_{r_{k+1}} \|_1 + \| \wbar{X}_{r_{k+1}e^{-\delta_k}}\|_1 \right)\,.\]
The third term can be written as
\begin{multline*}
\Bigl( \int_0^1 + \int_1^{r_{k+1}} \Bigr) \avg{\varphi'' \wbar X_s} \frac{\dd{s}}{s}
- \, \frac{1}{\delta_k} \int_{r_{k+1}e^{-\delta_k}}^{r_{k+1}} \frac{\dd{t}}{t}\, \int_t^{r_{k+1}}
\avg{ \phi^\dprime\, \wbar{X}_s } \, \frac{\dd{s}}{s}  \\[5pt]
\le
|\wbar{Q}(r_{k+1})| + \int_0^1 \| \varphi'' \wbar X_s \|_1 \frac{\dd{s}}{s}
+ \frac{1}{\delta_k} \int_{r_{k+1}e^{-\delta_k}}^{r_{k+1}} \frac{\dd{t}}{t}\, \int_t^{r_{k+1}}
\| \phi^\dprime\, \wbar{X}_s \|_1 \, \frac{\dd{s}}{s} \\[5pt]
\le |\wbar{Q}(r_{k+1})|
+ \| \varphi'' \|_1\, \int_0^1 \| \wbar X_s \|_\infty \frac{\dd{s}}{s}
+  \| \varphi'' \|_{q_0}\, \int_{r_{k+1}e^{-\delta_k}}^{r_{k+1}}
\| \wbar{X}_s \|_{p_0} \, \frac{\dd{s}}{s}
\end{multline*}
Due to our normalization of $F$, for $|z|\le 1$, we have
$1-\tfrac12 |z| \le |F(z)| \le 1+\tfrac12 |z|$, whence,
$-|z|\le \log|F(z)| \le \tfrac12 |z|$. We also have
$1 \le \sigma_F(r) \le 1+r$, whence, $0\le \log\sigma_F(r) \le r$.
Thus, for $r=|z|\le 1$, we get
$-2r \le X_r = \log |F| - \log \sigma_F \le \tfrac12 r$,
whence, $|\wbar{X}_r|\le 2.5 r$. Therefore,
\[
\int_0^1 \| \wbar X_s \|_\infty \frac{\dd{s}}{s} \le 2.5\,.
\]
Putting these estimates together, we obtain
\begin{multline}\label{eq:A}
n_F(r,\phi) \le
\frac{1}{\delta_k} \int_{r_{k+1}e^{-\delta_k}}^{r_{k+1}} \frac{\cE n_F(t,\phi)}{t} \, \dd{t}
+ |\wbar{Q}(r_{k+1})| \\[5pt]
+ \frac{1}{\delta_k} \left( \| \wbar{X}_{r_{k+1}} \|_1 + \| \wbar{X}_{r_{k+1}e^{-\delta_k}}\|_1 \right)
+ \| \varphi'' \|_{q_0}\, \int_{r_{k+1}e^{-\delta_k}}^{r_{k+1}}
\| \wbar{X}_s \|_{p_0} \, \frac{\dd{s}}{s}  + 2.5 \| \phi''\|_1\,.
\end{multline}

Next,
\begin{multline*}
n_F(r,\phi) - \cE n_F(r,\phi) \le
\frac{1}{\delta_k} \int_{r_{k+1}e^{-\delta_k}}^{r_{k+1}} \frac{\cE \bigl[ n_F(t,\phi) - n_F(r, \phi)\bigr]}{t} \, \dd{t}
+ |\wbar{Q}(r_{k+1})| \\[5pt]
+ \frac{1}{\delta_k} \left( \| \wbar{X}_{r_{k+1}} \|_1 + \| \wbar{X}_{r_{k+1}e^{-\delta_k}}\|_1 \right)
+ \| \varphi'' \|_{q_0}\, \int_{r_{k+1}e^{-\delta_k}}^{r_{k+1}}
\| \wbar{X}_s \|_{p_0} \, \frac{\dd{s}}{s} + 2.5 \| \phi''\|_1\,.
\end{multline*}
Since $0\le \phi \le 1$, we have for $t\ge r$, $n_F(t,\phi) - n_F(r, \phi) \le n_F(t) - n_F(r)$.
Therefore,
\begin{multline*}
\frac{1}{\delta_k} \int_{r_{k+1}e^{-\delta_k}}^{r_{k+1}} \frac{\cE \bigl[ n_F(t,\phi) - n_F(r, \phi)\bigr]}{t} \, \dd{t}
\le \frac{1}{\delta_k} \int_{r_{k+1}e^{-\delta_k}}^{r_{k+1}} \frac{\cE \bigl[ n_F(t) - n_F(r)\bigr]}{t} \, \dd{t} \\[5pt]
\stackrel{r\ge r_ke^{\delta_k}}\le
\frac1{\delta_k}\,
\Bigl[ \int_{r_{k+1}e^{-\delta_k}}^{r_{k+1}} \frac{\cE n_F(t)}{t} \, \dd{t}
- \int_{r_{k}}^{r_{k}e^{\delta_k}} \frac{\cE n_F(t)}{t} \, \dd{t} \Bigr]\,.
\end{multline*}
Applying relation~\eqref{int_n_F_formula} in Lemma~\ref{lemma:corol_to_Jensen} (with $\phi =1 $) and then
Lemma~\ref{lemma:log_integ}, we see that the RHS equals
\begin{multline*}
\frac1{\delta_k}\,
\Bigl[ \int_{r_{k+1}e^{-\delta_k}}^{r_{k+1}}
- \int_{r_{k}}^{r_{k}e^{\delta_k}} \Bigr]\, \frac{s_F(t)}{t} \, \dd{t}
+ \frac1{\delta_k} \cE \bigl[ \langle X_{r_{k+1}}-X_{r_{k+1}e^{-\delta_k}} \rangle
- \langle X_{r_{k}e^{\delta_k}}-X_{r_{k}} \rangle \bigr] \\[5pt]
\le \bigl[ s_F(r_{k+1}) - s_F(r_k) \bigr]
+ \frac1{\delta_k} \cE \bigl[ \| X_{r_{k+1}} \|_1 + \| X_{r_{k+1}e^{-\delta_k}} \|_1
+ \| X_{r_{k}e^{\delta_k}} \|_1 + \| X_{r_{k}}\|_1 \bigr] \\[7pt]
\le \bigl[ s_F(r_{k+1}) - s_F(r_k) \bigr] + \frac{C}{\delta_k}\,,
\end{multline*}
whence
\begin{multline*}
n_F(r,\phi) - \cE n_F(r,\phi) \le \bigl[ s_F(r_{k+1}) - s_F(r_k) \bigr]
+ |\wbar{Q}(r_{k+1})| \\[7pt]
+  \frac{1}{\delta_k} \left( \| \wbar{X}_{r_{k+1}} \|_1 + \| \wbar{X}_{r_{k+1}e^{-\delta_k}}\|_1 \right)
+ \| \varphi'' \|_{q_0}\, \int_{r_{k+1}e^{-\delta_k}}^{r_{k+1}}
\| \wbar{X}_s \|_{p_0} \, \frac{\dd{s}}{s} + 2.5 \| \phi'' \|_1 + \frac{C}{\delta_k}\,.
\end{multline*}
The proof of the matching lower bound
\begin{multline*}
n_F(r,\phi) - \cE n_F(r,\phi) \ge -\bigl[ s_F(r_{k+1}) - s_F(r_k) \bigr] - |\wbar{Q}(r_{k})| \\[7pt]
-  \frac{1}{\delta_k} \left( \| \wbar{X}_{r_{k}} \|_1 + \| \wbar{X}_{r_{k}e^{\delta_k}}\|_1 \right)
- \| \varphi'' \|_{q_0}\,  \int_{r_{k}}^{r_{k}e^{\delta_k}}
\| \wbar{X}_s \|_{p_0} \, \frac{\dd{s}}{s} - 2.5 \| \phi'' \|_1 - \frac{C}{\delta_k}
\end{multline*}
is very similar and we skip it. \end{proof}

\begin{proof}[Proof of Claim~\ref{clm_approx_for_cE_n_F}]
The proof is similar to the previous one. We estimate the first term on the RHS of the upper bound~\eqref{eq:A}
applying  Lemmas~\ref{lemma:corol_to_Jensen} and~\ref{lemma:log_integ}:
\begin{multline*}
\frac{1}{\delta_k} \int_{r_{k+1}e^{-\delta_k}}^{r_{k+1}} \frac{\cE n_F(t,\phi)}{t} \, \dd{t}
\le \langle \phi \rangle\, \frac{1}{\delta_k} \int_{r_{k+1}e^{-\delta_k}}^{r_{k+1}} \frac{s_F(t)}{t} \, \dd{t} \\[7pt]
+ \frac1{\delta_k} \cE \bigl[ \| X_{r_{k+1}e^{-\delta_k}}\|_1 + \| X_{r_{k+1}} \|_1\bigr]
+ \frac1{\delta_k} \int_{r_{k+1}e^{-\delta_k}}^{r_{k+1}} \frac{\dd t}{t}\,
\int_0^t \cE \| \phi'' X_s \|_1 \, \frac{\dd s}{s} \\[7pt]
\le \langle \phi \rangle\, s_F(r_{k+1}) + \frac{C}{\delta_k}
+ \| \phi''\|_{q_0} \Bigl[ \int_0^1 + \int_1^{r_{k+1}} \Bigr] \cE \| X_s \|_{p_0}\, \frac{\dd s}{s} \\[7pt]
\le  \langle \phi \rangle\, s_F(r_{k+1}) + \frac{C}{\delta_k}
+ C(q_0) \| \phi''\|_{q_0} \log r_{k+1}\,.
\end{multline*}
Plugging this estimate into~\eqref{eq:A}, we obtain
\begin{multline*}
n_F(r, \phi) - \langle \phi \rangle s_F(r)
\le \bigl[ s_F(r_{k+1}) - s_F(r_k) \bigr] + \frac{C}{\delta_k}
+ \| \phi''\|_{q_0} C(q_0) \log r_{k+1}
+ |\wbar{Q}(r_{k+1})| \\[7pt]
+ \frac{1}{\delta_k} \left( \| \wbar{X}_{r_{k+1}} \|_1 + \| \wbar{X}_{r_{k+1}e^{-\delta_k}}\|_1 \right)
+ \| \varphi'' \|_{q_0}\, \int_{r_{k+1}e^{-\delta_k}}^{r_{k+1}}
\| \wbar{X}_s \|_{p_0} \, \frac{\dd{s}}{s}  + 2.5 \| \phi''\|_1\,.
\end{multline*}
Combining with the matching lower bound and taking the expectation, we get
\[
\cE \bigl| n_F(r, \phi) - \langle \phi \rangle s_F(r)  \bigr| \le
\bigl[ s_F(r_{k+1}) - s_F(r_k) \bigr] + \frac{C}{\delta_k}
+ C(q_0) \| \phi''\|_{q_0}  \log r_{k+1}\,,
\]
proving Claim~\ref{clm_approx_for_cE_n_F}. \end{proof}

\subsection{Estimate of $\cE | n_F(r, \phi) - \langle \phi \rangle s_F(r) |$}\label{subsect:mean_estimate}

Using Claim~\ref{clm_approx_for_cE_n_F}, we readily prove assertion (ii) of Theorem~\ref{thm2}.
\begin{proof}
We need to estimate the expression
\[
\left[ s_F(r_{k+1}) - s_F(r_k) \right]
 + C(q_0) \| \phi^\dprime \|_{q_0} \log r_{k+1}  + \frac{C}{\delta_k}\,,
\]
which appears on the RHS of the bound given in Claim~\ref{clm_approx_for_cE_n_F}.
We choose the sequence $r_k$ so that $s_F(r_k)+\log r_k = k^2$. Then
\[
s_F(r_{k+1})-s_F(r_k) \le 3k \le 3 \bigl( s_F(r_k)^{\frac12} + \log^{\frac12} r_k \bigr)
< 3 \bigl( s_F(r)^{\frac12} + \log^{\frac12} r \bigr),
\]
and
\[
\log r_{k+1} < 3k + \log {r_k} < 3 s_F(r)^{\frac12} + 4 \log r + 3\,.
\]
Put
$\delta_k = \bigl( k \log^2 k \bigr)^{-1}$. Then for $\gamma>\tfrac12$, we have
\[
\delta_k^{-1} = k \log^2 k
< C(\gamma) \bigl( s_F(r_k)^\gamma  + \log^\gamma r_k\bigr)
< C(\gamma) \bigl( s_F(r)^\gamma  + \log r \bigr)\,,
\]
whence
\[
\cE \bigl| n_F(r,\phi) - \avg{\phi} s_F(r) \bigr| <
C(q_0, \gamma) (1+\|\phi^\dprime \|_{q_0})\, \bigl( s_F(r)^\gamma  + \log r \bigr)\,,
\]
completing the proof.
\end{proof}

\medskip
Now, we start proving the more difficult part (i) of Theorem~\ref{thm2}, that is, the almost sure
estimate for $ | n_F(r, \varphi) - \cE n_F(r, \varphi) |$. For this, we need to estimate the RHS of the
bound given in Claim~\ref{clm_approx_for_n_F}.

\subsection{Easy error terms}
Here we give an almost sure bound for the error terms ${\rm ET}_1$ in Claim~\ref{clm_approx_for_n_F}. Recall that
\begin{multline*}
{\rm ET}_1 =  \frac{1}{\delta_k} \left( \| \wbar{X}_{r_k} \|_1 + \| \wbar{X}_{r_k e^{\delta_k}} \|_1 + \| \wbar{X}_{r_{k+1} e^{-\delta_k}} \|_1 + \| \wbar{X}_{r_{k+1}} \|_1 \right) \\[7pt]
+  \| \phi^\dprime \|_{q_0} \Bigl( \int_{r_k}^{r_k e^{\delta_k}} + \int_{r_{k+1} e^{-\delta_k}}^{r_{k+1}} \Bigr) \| \wbar{X}_s \|_{p_0} \frac{\dd{s}}{s} \,.
\end{multline*}

\begin{claim}
For almost every $\omega \in \Omega$, there exists $k_0 = k_0(\omega)$ such that
\[
{\rm ET}_1 \le \frac{C}{\delta_k} \log^6 k + C(q_0) \| \phi^\dprime \|_{q_0} \qquad\text{for all\ } k \ge k_0 \,.
\]
\end{claim}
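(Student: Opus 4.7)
\medskip\noindent\emph{Proof proposal.}
The plan is to bound the two pieces of ${\rm ET}_1$ separately: the four pointwise-in-$r$ norms $\|\wbar X_\cdot\|_1$ and the two integral terms in $s$. In each case I will first promote the moment bound of Lemma~\ref{lemma:log_integ} from $X_t$ to the centered version $\wbar X_t = X_t - \cE X_t$, and then invoke the Borel--Cantelli-style Lemma~\ref{lemma:BC}.

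\medskip\noindent\textbf{Step 1 (centered moments).} I will verify that for every $p \ge 1$,
\[
\cE \| \wbar X_t \|_p^p \le (Cp)^{6p},
\]
possibly with a larger numerical constant $C$ than in Lemma~\ref{lemma:log_integ}. This follows from Minkowski $\|\wbar X_t\|_p \le \|X_t\|_p + \|\cE X_t\|_p$ combined with Jensen in $\omega$: pointwise in $\theta$ one has $|\cE X_t(\theta)|^p \le \cE |X_t(\theta)|^p$, so integrating in $\theta$ gives $\|\cE X_t\|_p^p \le \cE \|X_t\|_p^p \le (Cp)^{6p}$, and then $(a+b)^p \le 2^{p-1}(a^p+b^p)$ absorbs the factor.

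\medskip\noindent\textbf{Step 2 (the four pointwise $L^1$ norms).} For each of the four random variables $Y_k^{(i)} = \|\wbar X_{t_k^{(i)}}\|_1$ with $t_k^{(i)} \in \{r_k, r_k e^{\delta_k}, r_{k+1}e^{-\delta_k}, r_{k+1}\}$, Hölder and Step~1 give $\cE (Y_k^{(i)})^p \le \cE \|\wbar X_{t_k^{(i)}}\|_p^p \le (Cp)^{6p}$, so Lemma~\ref{lemma:BC} applies with $G_k(p) = Cp^6$. Consequently, almost surely, $Y_k^{(i)} \le C\log^6 k$ for all $k \ge k_0(\omega)$ and all $i=1,\dots,4$, contributing $\tfrac{C}{\delta_k}\log^6 k$ to ${\rm ET}_1$.

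\medskip\noindent\textbf{Step 3 (the integral terms).} Set
\[
I_k = \Bigl( \int_{r_k}^{r_k e^{\delta_k}} + \int_{r_{k+1} e^{-\delta_k}}^{r_{k+1}} \Bigr) \|\wbar X_s\|_{p_0} \frac{\dd s}{s}.
\]
By Minkowski's integral inequality, for $p \ge p_0$,
\[
(\cE I_k^p)^{1/p} \le \Bigl( \int_{r_k}^{r_k e^{\delta_k}} + \int_{r_{k+1} e^{-\delta_k}}^{r_{k+1}} \Bigr) (\cE \|\wbar X_s\|_{p_0}^p)^{1/p}\,\frac{\dd s}{s} \le Cp^6 \cdot 2\delta_k,
\]
where I used $\|\wbar X_s\|_{p_0} \le \|\wbar X_s\|_p$ together with Step~1. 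Hence $G_k(p) = 2Cp^6 \delta_k$ in Lemma~\ref{lemma:BC}, yielding $I_k \le C\delta_k \log^6 k$ almost surely for $k \ge k_0(\omega)$. Since $\delta_k = (k\log^2 k)^{-1}$ satisfies $\delta_k \log^6 k = (\log k)^4/k \to 0$, eventually $\|\phi''\|_{q_0} I_k \le C(q_0) \|\phi''\|_{q_0}$, and in fact a universal constant suffices.

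\medskip\noindent\textbf{Main obstacle.} The only slightly delicate point is controlling moments of the $s$-integral $I_k$ in a way that produces a factor of $\delta_k$ (not $\delta_k^{1/p}$) on the right-hand side, so that after the Borel--Cantelli loss of $\log^6 k$ the product $\delta_k \log^6 k$ is still $o(1)$. This is handled cleanly by Minkowski's integral inequality, which preserves the linear dependence on the length of the integration interval.
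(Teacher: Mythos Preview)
Your proof is correct, and Steps~1--2 match the paper's argument essentially verbatim.  Step~3 is where you diverge: the paper handles the integral $I_k$ by the cruder first-moment route
\[
\cE I_k \le C(q_0)\,\delta_k, \qquad \sum_k \delta_k < \infty,
\]
followed by Chebyshev and Borel--Cantelli, which gives only $I_k \le C(q_0)$ eventually---exactly what the claim needs.  You instead use Minkowski's integral inequality and Lemma~\ref{lemma:BC} to obtain the sharper $I_k \le C\delta_k \log^6 k$.  Both approaches are valid; yours extracts more than is required, at the cost of a slightly heavier toolset, while the paper's argument uses only the hypothesis $\sum_k \delta_k < \infty$ already in force at this point (so it does not need to anticipate the specific choice $\delta_k = (k\log^2 k)^{-1}$ made later).
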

\begin{proof}
Let $\rho_k$ be one of the values $r_k, r_k e^{\delta_k}, r_{k+1} e^{-\delta_k}, r_{k+1}$. Then, by Lemma~\ref{lemma:log_integ},
\[
\cE \| \wbar{X}_{\rho_k} \|_1^p \le 2^{p} \cE \| X_{\rho_k} \|_p^p \le (C p )^{6 p} \,.
\]
Hence, by Lemma~\ref{lemma:BC}, for almost every $\omega \in \Omega$ and for every $k \ge k_0(\omega)$, $\| \wbar{X}_{\rho_k} \|_1 \le C \log^6 k$.
Next, by Lemma~\ref{lemma:log_integ},
\[
\cE \Bigl\{ \Bigl( \int_{r_k}^{r_k e^{\delta_k}} + \int_{r_{k+1} e^{-\delta_k}}^{r_{k+1}} \Bigr) \| \wbar{X}_s \|_{p_0} \frac{\dd{s}}{s} \Bigr\}
\le C(q_0) \delta_k \,.
\]
Recalling that $\sum_k \delta_k < \infty$ and applying Chebyshev's inequality and the Borel-Cantelli Lemma, we see that for almost every $\omega \in \Omega$, these
two integrals do not exceed $C(q_0)$ for every $k \ge k_0(\omega)$. This proves the claim.
\end{proof}

\subsection{Crude estimate of the integral $\wbar{Q}(r_{k+1})$}
It remains to estimate the integral
\[
\wbar{Q} = \wbar{Q}(r_{k+1}) = \int_1^{r_{k+1}} \frac{\langle \phi''\, \wbar{X}_t\rangle}{t} \, \dd{t} \,.
\]
We start with a crude bound.
\begin{claim}\label{clm_simple_est_for_Q_bar}
For almost every $\omega \in \Omega$, there exists $k_0 = k_0(q_0, \omega)$ such that
\[
\left| \wbar{Q}(r_{k+1}) \right| \le
C \| \phi^\dprime \|_{q_0} \log^6 k \cdot \log r_{k+1} \,, \qquad k \ge k_0\,.
\]
\end{claim}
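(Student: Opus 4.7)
The plan is to apply three standard moves in sequence: a pointwise Hölder estimate on the angular average, a Minkowski integral inequality in the probability variable, and finally Lemma~\ref{lemma:BC}. The only input about $F$ will be Lemma~\ref{lemma:log_integ}.

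First, by Hölder's inequality on the circle,
\[
| \avg{\phi'' \, \wbar{X}_t} | \le \| \phi'' \|_{q_0}\, \| \wbar{X}_t \|_{p_0}\,,
\]
so that
\[
|\wbar{Q}(r_{k+1})| \le \| \phi'' \|_{q_0} \int_1^{r_{k+1}} \| \wbar{X}_t \|_{p_0}\, \frac{\dd{t}}{t}\,.
\]
Call the integral on the right $I_k$. Applying Minkowski's integral inequality in $L^p(\Omega)$ for $p\ge p_0$, and then using that $\| \cdot \|_{p_0} \le \| \cdot \|_p$ on the probability space $[-\pi,\pi]$ with normalized Lebesgue measure whenever $p\ge p_0$, we get
\[
\bigl( \cE |I_k|^p \bigr)^{1/p} \le \int_1^{r_{k+1}} \bigl( \cE \| \wbar{X}_t \|_{p_0}^p \bigr)^{1/p}\, \frac{\dd{t}}{t} \le \int_1^{r_{k+1}} \bigl( \cE \| \wbar{X}_t \|_{p}^p \bigr)^{1/p}\, \frac{\dd{t}}{t}\,.
\]

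Next, since $\wbar{X}_t = X_t - \cE X_t$, the elementary inequality $\|\wbar{X}_t\|_p \le 2 \| X_t \|_p$ (valid in any $L^p$ of a probability space) together with Lemma~\ref{lemma:log_integ} gives
\[
\bigl( \cE \| \wbar{X}_t \|_p^p \bigr)^{1/p} \le 2\, \bigl( \cE \| X_t \|_p^p \bigr)^{1/p} \le C p^6\,.
\]
Substituting this into the previous display yields, for all $p\ge p_0$,
\[
\bigl( \cE |I_k|^p \bigr)^{1/p} \le C p^6 \log r_{k+1}\,.
\]

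Finally, we apply Lemma~\ref{lemma:BC} to the sequence $Y_k = I_k$ with $G_k(p) = C p^6 \log r_{k+1}$ (which is increasing in $p$). The lemma gives that, almost surely, for every $k\ge k_0(\omega)$,
\[
I_k \le e \cdot C (\log k)^6 \log r_{k+1}\,,
\]
and multiplying by $\| \phi'' \|_{q_0}$ gives the claimed bound on $|\wbar{Q}(r_{k+1})|$. There is no real obstacle here; the estimate is called \emph{crude} because its dependence on $r_{k+1}$ is $\log r_{k+1}$, which is too large when $s_F(r)$ is of order $\log r$ or smaller, and will have to be sharpened in the next step of the argument.
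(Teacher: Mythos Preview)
Your proof is correct and follows essentially the same three-step scheme as the paper: H\"older on the circle, the moment bound from Lemma~\ref{lemma:log_integ}, and Lemma~\ref{lemma:BC}. The only cosmetic differences are that you use Minkowski's integral inequality in $t$ where the paper uses H\"older directly, and that you factor out $\|\phi''\|_{q_0}$ \emph{before} applying Borel--Cantelli (so the random variable $I_k$ is independent of $\phi$), which makes the $\phi$-independence of $k_0$ stated in the claim more transparent than in the paper's write-up.
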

\begin{proof}
For any $p \ge p_0$ we have
$ \left| \avg{\phi^\dprime X_t} \right| \le \| \phi^\dprime \|_{q_0} \| X_t \|_p $.
Thus, applying first H\"older's inequality and then Lemma~\ref{lemma:log_integ}, we get
\[
\cE \left| \wbar{Q} \right|^p \le  \| \phi'' \|_{q_0}^p \cdot
\int_1^{r_{k+1}} \cE \| X_t \|_p^p\, \frac{\dd{t}}{t} \cdot (\log r_{k+1})^{p-1} 
\le  \| \phi^\dprime \|_{q_0}^p \cdot \bigl( C p \bigr)^{6p} \cdot \log^p r_{k+1}\,.
\]
Now Lemma~\ref{lemma:BC} yields the required result.
\end{proof}

\subsection{Refined estimate of the integral $\wbar{Q}(r_{k+1})$}

Here, we will present a more delicate estimate for $\wbar{Q}$, which refines the previous one.
The idea is to partition the interval $[1, r_{k+1}]$ into intervals of equal logarithmic length $\tau_k$ with
$1 \ll \tau_k \ll \log r_{k+1}$ and represent $\wbar{Q}$ as a sum of integrals over these intervals. It turns out that these integrals
can be well approximated by independent bounded random variables with zero mean.
Then the natural cancellation in their sum
yields an improved bound for $ \wbar{Q}(r_{k+1}) $.

Put $Z = Z(t) \eqdef \avg{\phi^\dprime X_t}$ and $\wbar{Z} = Z - \cE Z$. Then
\[
\wbar{Q} = \wbar{Q}(r_{k+1}) = \int_1^{r_{k+1}} \frac{\wbar{Z}(t)}{t} \, \dd{t} \,.
\]
We are going to estimate $\cE \left| \wbar{Q} \right|^p$. This will be done in several steps.

\subsubsection{Truncation of the logarithm}
Fix $k$ and $\Lambda = \Lambda(k, r_{k+1})$, and put
\[
\log_\Lambda x = \begin{cases}
\log x ,   & \quad |\log x| \le \Lambda^6\,, \\[5pt]
-\Lambda^6 , & \quad \log x < -\Lambda^6\,, \\[5pt]
\Lambda^6 ,  & \quad \log x > \Lambda^6 \,, \\[5pt]
\end{cases}
\]
and
$ Z_\Lambda(t) = \avg{\phi^\dprime \log_\Lambda | \what{F}_t |} $.
Then,
\[
| Z(t) - Z_\Lambda(t)| \le
\| \phi^\dprime \|_{q_0}  \cdot \bigl\| \log |\what{F}_t| - \log_\Lambda |\what{F}_t|\, \bigr\|_{p_0} \,,
\]
and, for $p \ge p_0$,
\begin{eqnarray*}
\cE \bigl\{ | Z(t) - Z_\Lambda(t)|^p \bigr\}
& \le & \| \phi^\dprime \|_{q_0}^p\,
\cE \bigl\{ \bigl\| \log |\what{F}_t| - \log_\Lambda |\what{F}_t|\, \bigr\|_p^p \bigr\} \\[10pt]
& \le & \| \phi^\dprime \|_{q_0}^p\, \cE \bigl\{ \avg{\indf{E_{\Lambda}(t)}  | X_t |^p}  \bigr\} \,,
\end{eqnarray*}
where $E_\Lambda(t) = \left\{ \theta \in \left[-\pi, \pi \right] \,\colon\, \left| X_t(\theta) \right| > \Lambda^6 \right\}$,
and $\indf{E_{\Lambda}(t)}$ is the indicator function of the set $E_\Lambda (t)$.
Using the Cauchy-Schwarz inequality and Lemma~\ref{lemma:log_integ}, we get
\begin{multline*}
\cE \bigl\{ | \wbar{Z}(t) - \wbar{Z}_\Lambda(t)|^p \bigr\} \le
\cE \bigl\{ | Z(t) - Z_\Lambda(t)|^p \bigr\} \\[10pt]
\le  \| \phi^\dprime \|_{q_0}^p\,
\sqrt{\cE \bigl\{ m_\theta ( E_\Lambda(t) ) \bigr\} \cdot \cE \| X_t \|_{2p}^{2p} }
\le   \| \phi^\dprime \|_{q_0}^p \cdot C \exp(-c \Lambda) \cdot (Cp^6)^p\,,
\end{multline*}
where $\wbar{Z}_\Lambda = Z_\Lambda - \cE Z_\Lambda$ and $m_\theta$ is Lebesgue measure on $[-\pi, \pi]$. Then
\begin{equation}\label{eq:est_for_error_in_cutoff}
\cE \bigl\{ \left| \wbar{Q} - \wbar{Q}_\Lambda \right|^p \bigr\}
= \cE \Bigl\{ \Bigl| \int_1^{r_{k+1}} \left( \wbar{Z}(t) - \wbar{Z}_\Lambda(t) \right) \, \frac{\dd{t}}{t} \Bigr|^p \Bigr\}
\le \left( C p^6 \| \phi^\dprime \|_{q_0} \log r_{k+1} \right)^p e^{-c \Lambda } \,.
\end{equation}

\subsubsection{Replacing the Taylor series $F$ by a group of its central terms}
Let $\tau = \tau (k, r_{k+1})$ be a large parameter (to be chosen later). Let
\[
\what{P}(z) \eqdef \frac{1}{\sigma_F(r)} \sum_{\ell=\nu_F(r e^{-\tau})}^{\nu_F(r e^\tau)} \xi_\ell a_\ell z^\ell \,,
\quad r = |z|\,,
\]
$ \what{P}_r(\theta) = \what{P}(re^{{\rm i}\theta}) $,
$ \wbar{Z}_\Lambda^{\, \tt c.t.} (t) = \avg{\phi^\dprime \bigl(\log_\Lambda |\what{P}_t|
- \cE \log_\Lambda |\what{P}_t| \bigr)} $,
and
\[
\wbar{Q}_\Lambda^{\, \tt c.t.} = \wbar{Q}_\Lambda^{\, \tt c.t.} (r_{k+1}) =
\int_1^{r_{k+1}} \wbar{Z}_\Lambda^{\, \tt c.t.}(t)\, \frac{\dd{t}}t\,.
\]
As before, $\nu_F(r)$ denotes the central index of the Taylor series $F$.

Applying Lemma~\ref{approx_by_main_terms} and using the fact that
\[
|\log_\Lambda x - \log_\Lambda y| \le e^{\Lambda^6} |x - y|\, \quad x,y>0 \,,
\]
we get
\[
\sup_{t \ge 1} \bigl| \wbar{Z}_\Lambda(t) - \wbar{Z}_\Lambda^{\, \tt c.t.}(t) \bigr| \le
C e^{\Lambda^6 - \tau} \| \phi^\dprime \|_1 \,,
\]
whence, for every $\omega \in \Omega$,
\begin{equation}\label{Q_bar_est_for_error_in_poly_approx}
\bigl| \wbar{Q}_\Lambda - \wbar{Q}_\Lambda^{\, \tt c.t.} \bigr|  =
\Bigl| \int_1^{r_{k+1}} \left( \wbar{Z}_\Lambda(t) - \wbar{Z}_\Lambda^{\, \tt c.t.}(t) \right) \, \frac{\dd{t}}{t} \Bigr|
\le C \| \phi^\dprime \|_1 \, \exp(\Lambda^6 - \tau ) \cdot \log r_{k+1}\,.
\end{equation}

\subsubsection{Fast and slow intervals}
From now on, we assume that $1 \ll \tau \ll \log r_{k+1}$ and that $L\stackrel{\rm def}=\tau^{-1} \log r_{k+1}$ is an integer.
For any integer $j$,
consider the intervals $J_j = \bigl[ e^{j\tau}, e^{(j+1)\tau} \bigr]$ of equal logarithmic length $\tau$.
We call the interval $J_j$ taken from this collection {\em slow}
if the central index $\nu_F$ remains constant on $J_j$ as well as on its two neighbouring intervals, that is, if
$ \nu_F\bigl( e^{(j-1)\tau} \bigr) = \nu_F\bigl( e^{(j+2)\tau} \bigr) $.
Otherwise, the interval $J_j$ is called {\em fast}.

On every slow interval $J_j$ the sum $\what{P}$ consists of a single term
\[
\what{P}(z) = \frac{\xi_{\nu_j}a_{\nu_j}z^{\nu_j}}{\sigma_F(|z|)}\,,
\]
where $\nu_j$ is the common value of $\nu_F$ on $J_j$, and therefore
\[
|\what{P}_t| = \frac{|a_{\nu_j}|t^{\nu_j}}{\sigma_F(t)}
\]
is non-random. Hence, for such $t$'s, $\wbar{Z}_\Lambda^{\, \tt c.t.} (t) = 0$;
i.e., slow intervals do not contribute to the integral $\wbar{Q}_\Lambda^{\, \tt c.t.}$. Thus,
\begin{equation}\label{eq:Q-bar-lambda-P}
\wbar{Q}_\Lambda^{\, \tt c.t.} =
\int_0^\tau \Bigl( \sum_{j\in\mathfrak J} \wbar{Z}_\Lambda^{\, \tt c.t.} (e^{j\tau + s}) \Bigr) \, \dd{s} \,,
\end{equation}
where $\mathfrak J$ is the set of indices $j$ such that $J_j\subset [1, \log r_{k+1}]$
and $J_j$ is fast.

\subsubsection{Contribution of fast intervals}
We split the set $\mathfrak J$ into a bounded number of disjoint subsets $\mathfrak J'\subset \mathfrak J$ so
that, for $j_1, j_2\in\mathfrak J'$ and $j_1\ne j_2$, the intervals
\[
\bigl[ \nu_F\bigl( e^{(j_1-1)\tau}\bigr), \, \nu_F\bigl( e^{(j_1+2)\tau}\bigr) \bigr]\,,
\quad
\bigl[ \nu_F\bigl( e^{(j_2-1)\tau}\bigr), \, \nu_F\bigl( e^{(j_2+2)\tau}\bigr) \bigr]
\]
are disjoint (it is easy to see that six subsets $\mathfrak J'$ suffice).
Given $s\in [0, \tau]$, the random variable $ \wbar{Z}_\Lambda^{\, \tt c.t.}(\exp(j \tau + s)) $ may depend
only on $\xi_\ell$ with $\nu_F(e^{(j-1)\tau}) \le \ell \le \nu_F(e^{(j+2)\tau})$. Therefore,
given a subset $\mathfrak J'$ and a value $s\in [0, \tau]$,
the random variables $ \bigl\{ \wbar{Z}_\Lambda^{\, \tt c.t.}(\exp(j \tau + s)) \bigr\}_{j\in\mathfrak J'} $
are independent. This observation allows us to
estimate $\cE |K(s, \mathfrak J')|^p$, where
\[
K(s, \mathfrak J') \stackrel{\rm def}=
\sum_{j\in\mathfrak J'} \wbar{Z}_\Lambda^{\, \tt c.t.}(\exp(j \tau + s))\,.
\]
Indeed, recalling that $\cE \wbar{Z}_\Lambda^{\, \tt c.t.}(t) = 0$
and that $ \bigl| \wbar{Z}_\Lambda^{\, \tt c.t.} (t) \bigr| \le 2 \Lambda^6 \| \phi^\dprime \|_1$,
and applying the classical Khinchin-Marcinkiewicz--Zygmund inequality, we get
\[
\cE \left| K(s, \mathfrak J') \right|^p
\le (C p )^{p/2} \left( 2 \Lambda^6 \| \phi^\dprime \|_1 \right)^p \cdot | \mathfrak J'| ^{p/2}\,.
\]
Since $|\mathfrak J'| \le L = \tau^{-1}\log r_{k+1}$, the RHS does not exceed
$ \bigl( C \Lambda^6 \| \phi^\dprime \|_1 \sqrt{p\cdot \tau^{-1}\log r_{k+1}}\,  \bigr)^p $.

At last, using Minkowski's integral inequality and
recalling that we use only a bounded number of subsets $\mathfrak J'$,
we obtain
\begin{multline}\label{eq:5*}
\cE \bigl| \wbar{Q}_\Lambda^{\, \tt c.t.} \bigr|^p =  \cE \Bigl| \int_0^{\tau}
\sum_{\mathfrak J'}  K(s, \mathfrak J') \, \dd{s} \Bigr|^p
\le
C^p
\Bigl( \int_0^{\tau} \sum_{\mathfrak J' } \left( \cE \left| K(s, \mathfrak J') \right|^p \right)^{1/p} \, \dd{s} \Bigr)^p
\\[7pt]
\le (C \tau)^p \cdot \bigl( C \Lambda^6 \| \phi^\dprime \|_1 \sqrt{p\cdot \tau^{-1}\log r_{k+1}}\,  \bigr)^p
= \bigl( C \| \phi^\dprime \|_1\, \Lambda^6\, \sqrt{p\cdot \tau \log r_{k+1}}\, \bigr)^p\,.
\end{multline}

\subsubsection{Final estimate of $\wbar{Q}$}
Here, we prove the following estimate:
\begin{claim}\label{claim:final_bar-Q}
For a.e. $\omega\in\Omega$, every $k\ge k_0(\omega)$, and every $\varepsilon >0$,
we have
\begin{equation}\label{eq:final_bar-Q}
\bigl| \wbar{Q} \bigr|
\le C(\varepsilon) \| \phi'' \|_{q_0} \bigl( ( \log r_{k+1} )^{\frac12 +\varepsilon} + k^\varepsilon \bigr)\,.
\end{equation}
\end{claim}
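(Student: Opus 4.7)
The plan is to apply Lemma~\ref{lemma:BC} to each of the three summands in the decomposition
\[
\wbar{Q} = (\wbar{Q} - \wbar{Q}_\Lambda) + (\wbar{Q}_\Lambda - \wbar{Q}_\Lambda^{\, \tt c.t.}) + \wbar{Q}_\Lambda^{\, \tt c.t.},
\]
with the truncation level $\Lambda = \Lambda(k, r_{k+1})$ and the scale $\tau = \tau(k, r_{k+1})$ chosen as slowly growing functions of $k$ and $\log r_{k+1}$. The three moment bounds \eqref{eq:est_for_error_in_cutoff}, \eqref{Q_bar_est_for_error_in_poly_approx}, and \eqref{eq:5*} are the inputs; all of them, evaluated at $p = \log k$, plug directly into Lemma~\ref{lemma:BC}.

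Concretely, I would first pick $\Lambda = A(\varepsilon) \log k \cdot \log(2 + \log r_{k+1})$, with a constant $A(\varepsilon)$ large enough that the factor $e^{-c\Lambda/\log k}$ coming from \eqref{eq:est_for_error_in_cutoff} at $p = \log k$ overwhelms the growing factor $(\log k)^{6}\log r_{k+1}$; Borel--Cantelli then yields $|\wbar{Q} - \wbar{Q}_\Lambda| \to 0$ almost surely. With this $\Lambda$ fixed, I would take $\tau = \Lambda^{6} + 2\log\log r_{k+1}$, which is deterministic and, by \eqref{Q_bar_est_for_error_in_poly_approx}, forces $|\wbar{Q}_\Lambda - \wbar{Q}_\Lambda^{\, \tt c.t.}|$ to decay like $1/\log r_{k+1}$. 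It only remains to feed \eqref{eq:5*} with these parameters into Lemma~\ref{lemma:BC} at $p = \log k$, which gives, almost surely for every $k \ge k_0(\omega)$,
\[
\bigl| \wbar{Q}_\Lambda^{\, \tt c.t.} \bigr| \le C\, \|\phi''\|_{q_0}\, B(k)\, \sqrt{\log r_{k+1}}\,,
\]
where $B(k)$ is an explicit polynomial expression in $\log k$ and $\log\log r_{k+1}$ (of the order of $(\log k)^{9.5}(\log\log r_{k+1})^{9}$, coming from the $\Lambda^{6}\sqrt{\tau\cdot\log k}$ prefactor in \eqref{eq:5*}).

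To finish, I would convert the product $B(k)\sqrt{\log r_{k+1}}$ into the two-term bound $(\log r_{k+1})^{1/2+\varepsilon} + k^{\varepsilon}$ by a simple dichotomy: either $B(k) \le (\log r_{k+1})^{\varepsilon}$, in which case the whole product is $\le (\log r_{k+1})^{1/2+\varepsilon}$, or else $(\log r_{k+1})^{\varepsilon} < B(k)$, which forces $\log r_{k+1}$ to be polylogarithmic in $k$, making $B(k)\sqrt{\log r_{k+1}}$ itself polylogarithmic in $k$ and hence eventually $\le k^{\varepsilon}$. The main obstacle is the parameter budget: $\Lambda$ has to grow fast enough to beat the $(Cp^{6})^{p}$ tail supplied by Lemma~\ref{lemma:log_integ}, this forces $\tau \ge \Lambda^{6}$ through the deterministic error \eqref{Q_bar_est_for_error_in_poly_approx}, and the two combined inject an inescapable polylog-in-$k$ factor into the Khinchin--Marcinkiewicz--Zygmund bound \eqref{eq:5*}; the two-term shape of the RHS of \eqref{eq:final_bar-Q}, with the separate $k^{\varepsilon}$ summand, is precisely what is needed to swallow that factor when $\log r_{k+1}$ is too small to do so on its own.
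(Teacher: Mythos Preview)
Your approach is essentially the paper's: the same three-term decomposition, the same moment bounds \eqref{eq:est_for_error_in_cutoff}, \eqref{Q_bar_est_for_error_in_poly_approx}, \eqref{eq:5*}, the same parameter scales $\Lambda\sim\log k\cdot\log\log r_{k+1}$ and $\tau\sim\Lambda^6$, and Lemma~\ref{lemma:BC} at $p=\log k$. The one real gap is that you never check the hypothesis $1\ll\tau\ll\log r_{k+1}$ under which \eqref{eq:5*} was derived (the partition of $[1,r_{k+1}]$ into $L=\tau^{-1}\log r_{k+1}$ intervals requires $L\ge 1$). With your choices, $\tau\asymp(\log k\cdot\log\log r_{k+1})^6$, so whenever $\log r_{k+1}\lesssim(\log k)^6$ the bound \eqref{eq:5*} is simply unavailable, and your final dichotomy --- which is applied \emph{after} invoking \eqref{eq:5*} --- does not rescue this. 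A related issue: your $\Lambda=A(\varepsilon)\log k\cdot\log(2+\log r_{k+1})$ lacks a $\log\log k$ term, so when $\log r_{k+1}$ grows more slowly than any power of $\log k$ the factor $e^{-c\Lambda/\log k}(\log k)^6\log r_{k+1}$ in the first error need not tend to $0$.

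The paper closes both holes in one stroke: it first disposes of the regime $\log r_{k+1}\lesssim\log^7 k$ by invoking the crude bound of Claim~\ref{clm_simple_est_for_Q_bar}, which already gives $|\wbar{Q}|\le C\|\phi''\|_{q_0}\log^6 k\cdot\log r_{k+1}\le C\|\phi''\|_{q_0}\log^{13}k\le C(\varepsilon)\|\phi''\|_{q_0}k^\varepsilon$. Only afterwards, working under the standing assumption $\log r_{k+1}\gg\log^7 k$, does it run the refined argument; in that regime $\tau\ll\log r_{k+1}$ automatically, and $\log\log r_{k+1}\gtrsim\log\log k$ so the missing $\log\log k$ in $\Lambda$ is harmless. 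Once you insert this preliminary case split, your sketch becomes a correct proof, and your closing dichotomy is then just a clean way to convert $\Lambda^6\sqrt{\tau\log r_{k+1}\cdot\log k}$ into the two-term form.
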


\begin{proof}
We assume that $\log r_{k+1} \gg \log^7 k$.
Otherwise, the crude bound from
Claim~\ref{clm_simple_est_for_Q_bar} yields
\[
\bigl| \wbar{Q} \bigr| \le C \| \phi'' \|_{q_0} \, \log^{13}k \,,
\]
which immediately gives us~\eqref{eq:final_bar-Q}.

\medskip
Combining our estimates~\eqref{eq:est_for_error_in_cutoff}, \eqref{Q_bar_est_for_error_in_poly_approx}, and~\eqref{eq:5*},
we get
\begin{multline*}
\bigl( \cE | \wbar{Q} |^p \bigr)^{1/p}
\le \bigl[ \bigl( \cE | \wbar{Q} - \wbar{Q}_\Lambda |^p \bigr)^{1/p}
+ \bigl( \cE | \wbar{Q}_\Lambda - \wbar{Q}_\Lambda^{\, \tt c.t.} |^p \bigr)^{1/p}
+ \bigl( \cE | \wbar{Q}_\Lambda^{\, \tt c.t.} |^p )^{1/p}  \bigr] \\[7pt]
\le C \| \phi'' \|_{q_0}\,
\Bigl[ e^{-c\Lambda/p}\, p^6\, \log r_{k+1} + e^{\Lambda^6 - \tau} \log r_{k+1}
+ \Lambda^6 \sqrt{p \cdot \tau\log r_{k+1}}  \Bigr]\,.
\end{multline*}
Then, applying Lemma~\ref{lemma:BC}, we see that,
for almost every $\omega\in\Omega$ and every $k\ge k_0(\omega)$,
\[
\bigl| \wbar{Q} \bigr| \le
C\, \| \phi'' \|_{q_0} \Bigl[
e^{-c \Lambda/\log k} \cdot \log^6 k \cdot \log r_{k+1}
+ e^{\Lambda^6 - \tau} \cdot \log r_{k+1}
+ \Lambda^6   \, \sqrt{\tau  \log r_{k+1} \cdot \log k}
\Bigr].
\]

\medskip
Now it is time to choose the values of the parameters $\Lambda$ and $\tau$.
We put
\[
\Lambda = C_1 \log k \bigl( \log\log r_{k+1} + \log \log k \bigr)
\quad {\rm and\ then} \quad
\tau = \Lambda^6 + \log\log r_{k+1}
\]
with a sufficiently large constant $C_1$.
Recall that our derivation of the  bound for $ \cE | \wbar{Q}_\Lambda^{\, \tt c.t.} |^p $ used the
condition $1 \ll \tau \ll \log r_{k+1}$ which is guaranteed by the
assumption $\log^7k\!~\ll\!~\log r_{k+1}$.

\medskip
The choice of the parameters $\Lambda$ and $\tau$ yields boundedness of the terms
\[
e^{-c \Lambda/\log k} \cdot \log^6 k \cdot \log r_{k+1}\,, \quad
e^{\Lambda^6 - \tau} \cdot \log r_{k+1}\,.
\]
Thus, it remains to estimate the term
$ \Lambda^6   \, \sqrt{\tau  \log r_{k+1} \cdot \log k} $.
Observe that, for sufficiently large $k$, both $\Lambda^6$ and $\tau$ do not exceed
$ ( \log k )^C + ( \log\log r_{k+1} )^C $.
This yields the estimate
\[
\Lambda^6   \, \sqrt{\tau  \log r_{k+1} \cdot \log k}
\le C(\varepsilon) \bigl( (\log r_{k+1})^{\frac12 + \varepsilon} + ( \log k)^{C(\varepsilon)}  \bigr)
\le
C(\varepsilon) \bigl( (\log r_{k+1})^{\frac12 + \varepsilon} + k^\varepsilon  \bigr)\,,
\]
proving the claim.
\end{proof}

\subsection{Completing the proof of Theorem~\ref{thm2}}
We need to prove the almost sure part \textrm{(i)} of the theorem.
Returning to Claim~\ref{clm_approx_for_n_F}, and plugging in the estimates of all error terms,
for $r_ke^{\delta_k} \le r \le r_{k+1}e^{-\delta_k}$, $k\ge k_0(\omega)$,
we get
\begin{multline*}
|n_F(r,\phi) - \cE n_F(r,\phi)| \le  \bigl( s_F(r_{k+1}) - s_F(r_k) \bigr)
+ \frac{C}{\delta_k} \log^6 k + \\[5pt]
+ C(q_0, \varepsilon) \| \phi'' \|_{q_0} \bigl( ( \log r_{k+1} )^{\frac12 +\varepsilon} +  k^\varepsilon
\bigr)\,.
\end{multline*}
It remains to show  that with the same choice of the parameters $\delta_k$ and $r_k$ as in
Section~\ref{subsect:mean_estimate}, we get the desired result.
First, the choice  $ \delta_k = \bigl( k \log^2 k \bigr)^{-1} $
yields that the RHS of the previous estimate is
\[
\le \bigl( s_F(r_{k+1}) - s_F(r_k) \bigr) + C(\varepsilon) k^{1+\varepsilon} +
C(q_0, \varepsilon)\, \| \phi^\dprime \|_{q_0}\,
\bigl(  ( \log r_{k+1} )^{\frac12 +\varepsilon} + k^\varepsilon \bigr)\,.
\]
At last, we take $r_k$ so that
$ s_F(r_k) + \log r_k  = k^2 $.
Repeating the estimates from Section~\ref{subsect:mean_estimate}, we have
\[
s_F(r_{k+1})-s_F(r_k) \le 3 \bigl( s_F(r)^\frac12 + (\log r)^\frac12 \bigr)
\]
and
\[
k^\varepsilon \le s_F(r)^{\frac12 \varepsilon} + ( \log r )^{\frac12 \varepsilon}\,,
\quad k^{1+\varepsilon} \le s_F(r)^{\frac12(1+\varepsilon)} + ( \log r )^{\frac12 (1+\varepsilon)}\,.
\]
In addition,
\[
( \log r_{k+1} )^{\frac12 + \varepsilon} \le (k+1)^{1+2\varepsilon}
< 4 \bigl( s_F(r)^{\frac12 +\varepsilon} + ( \log r )^{\frac12 + \varepsilon} \bigr)\,.
\]
Therefore, for $k>k_0(\omega)$, we have
\[
|n_F(r,\phi) - \cE n_F(r,\phi)| \le
C(q_0, \varepsilon)\, \bigl( 1+\| \phi'' \|_{q_0}\bigr)\, \bigl(  s_F(r)^{\frac12 +\varepsilon} + ( \log r )^{\frac12 + \varepsilon} \bigr)\,.
\]
Taking $0<\varepsilon<\gamma-\tfrac12$, we finish off the proof of Theorem~\ref{thm2}.
\hfill $\Box$.


\begin{thebibliography}{A}

\bibitem{BNS}{\sc A. Borichev, A. Nishry, M. Sodin},
Entire functions of exponential type represented by pseudo-random and random Taylor series.
J. d'Analyse Math., to appear.
{\tt arXiv:1409.2736}.

\bibitem{Favorov-90} {\sc S. Yu. Favorov},
Growth and distribution of the values of holomorphic mappings of a finite-dimensional
space into a Banach space. Siberian Math. J. {\bf 31} (1990), 137--146.

\bibitem{Favorov-94} {\sc S. Yu. Favorov},
On the growth of holomorphic mappings from a finite-dimensional space into a Banach space.
Mat. Fiz. Anal. Geom. {\bf 1} (1994), 240--251.

\bibitem{Hayman} {\sc W. K. Hayman},
Subhamronic functions, vol. 2. Academic Press, 1989.

\bibitem{KZ} {\sc Z. Kabluchko, D. Zaporozhets}, Asymptotic distribution of complex zeros
of random analytic functions,  Ann. Probab. {\bf 42} (2014), 1374--1395.
{\tt arXiv:1205.5355}

\bibitem{L-O} {\sc J. E. Littlewood, A. C. Offord},
On the distribution of zeros and $a$-values
of a random integral function (II), Ann. of Math. (2) {\bf 49}, (1948), 885--952;
errata {\bf 50} (1949), 990--991.

\bibitem{MaFi1} {\sc M. P. Mahola, V. P. Filevich},
The angular distribution of zeros of random analytic functions,
Ufa Math. J. {\bf 12:4} (2012), 122--135. (Russian)

\bibitem{MaFi2} {\sc M. P. Mahola, V. P. Filevich},
The angular distribution of the values of analytic and random analytic functions,
Mat. Stud. {\bf 38:2} (2012), 147--153.

\bibitem{NNS} {\sc F. Nazarov, A. Nishry, M. Sodin},
Log-integrability of Rademacher Fourier series, with applications to random analytic functions,
Algebra \& Analysis {\bf 25:3} (2013),  147--184. {\tt  arXiv:1301.0529}



\bibitem{Offord-1965} {\sc A. C. Offord},
The distribution of the values of an entire
function whose coefficients are independent random variables. (I)
Proc. London Math. Soc. (3) {\bf 14a} (1965), 199--238.

\bibitem{Offord-1968} {\sc A. C. Offord},
The distribution of zeros of power series whose coefficients are independent random variables.
Indian J. Math. {\bf 9} (1967), 175--196.


\bibitem{Offord-1995} {\sc A. C. Offord},
{The distribution of the values of an entire function whose coefficients
are independent random variables}. (II).
 Math. Proc. Cambridge Phil. Soc. {\bf 118} (1995), 527--542.

\bibitem{Ullrich-88a} {\sc D. C. Ullrich},
{An extension of the Kahane-Khinchine inequality in a Banach space}.
Israel J. Math. {\bf 62} (1988), 56--62.

\bibitem{Ullrich-88b} {\sc D. C. Ullrich},
{Khinchin's inequality and the zeros of Bloch functions}.
Duke Math. J. {\bf 57} (1988), 519--535.

\end{thebibliography}
\end{document}